\newtheorem*{remark}{Remark}
\definecolor{markercolor}{RGB}{124.9, 255, 160.65}
\pgfplotsset{
compat=1.3,
width=10cm,
tick label style={font=\small},
label style={font=\small},
legend style={font=\small}
}
\renewcommand{\tilde}{\widetilde}
\renewcommand{\hat}{\widehat}
\newcommand{\td}[2]{\frac{{\rm d}#1}{{\rm d}{\rm #2}}}
\newcommand{\pd}[2]{\frac{\partial#1}{\partial#2}}
\newcommand{\nor}[1]{\left\| #1 \right\|}
\newcommand{\LRp}[1]{\left( #1 \right)}
\newcommand{\LRs}[1]{\left[ #1 \right]}
\newcommand{\LRb}[1]{\left| #1 \right|}
\newcommand{\LRc}[1]{\left\{ #1 \right\}}
\newcommand{\LRl}[1]{\left. #1 \right|}
\newcommand{\Grad} {\ensuremath{\nabla}}
\newcommand{\jump}[1] {\ensuremath{\llbracket#1\rrbracket}}
\newcommand{\avg}[1] {\ensuremath{\LRc{\!\{#1\}\!}}}
\newcommand{\eval}[2][\right]{\relax
  \ifx#1\right\relax \left.\fi#2#1\rvert}
\newcommand{\note}[1]{#1}
\newcommand{\diag}[1]{{\rm diag}\LRp{#1}}
\newcolumntype{C}[1]{>{\centering\let\newline\\\arraybackslash\hspace{0pt}}m{#1}}
\newcommand*\diff[1]{\mathop{}\!{\mathrm{d}#1}}
\renewcommand\d[1]{\mspace{6mu}\mathrm{d}#1\@ifnextchar\d{\mspace{-3mu}}{}}
\newcommand\myeq{\mathrel{\stackrel{\makebox[0pt]{\mbox{\normalfont\tiny def}}}{=}}}
\author{Jesse Chan, David C.\ Del Rey Fernandez, Mark H.\ Carpenter}
\title{Efficient entropy stable Gauss collocation methods}
\begin{document}

\maketitle

\begin{abstract}
The construction of high order entropy stable collocation schemes on quadrilateral and hexahedral elements has relied on the use of Gauss-Legendre-Lobatto collocation points \cite{fisher2013high, carpenter2014entropy, gassner2016split} and their equivalence with summation-by-parts (SBP) finite difference operators \cite{gassner2013skew}.  In this work, we show how to efficiently generalize the construction of semi-discretely entropy stable schemes on tensor product elements to Gauss points and generalized SBP operators.  Numerical experiments suggest that the use of Gauss points significantly improves accuracy on curved meshes.  
\end{abstract}

\section{Introduction}

%Time dependent nonlinear conservation laws are ubiquitous in computational fluid dynamics (CFD).  These conservation laws include the compressible Euler and Navier-Stokes equations, which govern compressible fluid behavior within aerospace applications, as well as the equations governing coastal flooding and geophysical flows (shallow water), and those describing astrophysical phenomena (magnetohydrodynamics).  The utility of these simulations depends both on the reliability and accuracy of numerical methods for the solution of the associated nonlinear partial differential equations (PDEs).  
Time dependent nonlinear conservation laws are ubiquitous in computational fluid dynamics, for which high order methods are increasingly of interest.  Such methods are more accurate per degree of freedom than low order methods, while also possessing much smaller numerical dispersion and dissipation errors.  This makes high order methods especially well suited to time-dependent simulations.  In this work, we focus specifically on discontinuous Galerkin methods on unstructured quadrilateral and hexahedral meshes.  These methods combine properties of high order approximations with the geometric flexibility of unstructured meshing.  %When paired with explicit time-stepping methods, high order DG and finite element methods are also well-suited for many-core and GPU computing \cite{klockner2009nodal, swirydowicz2017acceleration, wintermeyer2018entropy}.  

However, high order methods are notorious for being more prone to instability compared to low order methods \cite{wang2013high}.  This instability is addressed through various stabilization techniques (e.g.\ artificial viscosity, filtering, slope limiting).  However, these techniques often reduce accuracy to first or second order, and can prevent solvers from realizing the advantages of high order approximations.  Moreover, it is often not possible to prove that a high order scheme does not blow up even in the presence of stabilization.  This ambiguity can necessitate the re-tuning of stabilization parameters, as parameters which are both stable and accurate for one problem or discretization setting  may provide either too little or too much numerical dissipation for another.  

The instability of high order methods is rooted in the fact that discretizations of nonlinear conservation laws do not typically satisfy a discrete analogue of the conservation or dissipation of energy (entropy).  For low order methods, the lack of discrete stability can be offset by the presence of numerical dissipation, which serves as a stabilization mechanism.  Because high order methods possess low numerical dissipation, the absence of a discrete stability property becomes more noticeable, manifesting itself through increased sensitivity and instability.  

The dissipation of entropy serves as an energetic principle for nonlinear conservation laws \cite{dafermos2005compensated}, and requires the use of the chain rule in its proof.  Discrete instability is typically tied to the fact that, when discretizing systems of nonlinear PDEs, the chain rule does not typically hold at the discrete level.  The lack of a chain rule was circumvented by using a non-standard ``flux differencing'' formulation \cite{fisher2013high, carpenter2014entropy, gassner2016split, gassner2017br1}, which is key to constructing semi-discretely entropy stable high order schemes on unstructured quadrilateral and hexahedral meshes.  Flux differencing replaces the derivative of the nonlinear flux with the discrete differentiation of an auxiliary quantity.  This auxiliary quantity is computed through the evaluation of a two-point entropy conservative flux \cite{tadmor1987numerical} using pairs of solution values at quadrature points.  These entropy stable schemes were later extended to non-tensor product elements using GLL-like quadrature points on triangles and tetrahedra \cite{chen2017entropy, crean2018entropy}.  More recently, the construction of efficient entropy stable schemes was extended to more arbitrary choices of basis and quadrature \cite{chan2017discretely, chan2018discretely}.  

While entropy stable collocation schemes have been constructed on Gauss-like quadrature points without boundary nodes \cite{crean2017high}, the inter-element coupling terms for such schemes introduce an ``all-to-all'' coupling between degrees of freedom on two neighboring elements in one dimension (on tensor product elements in higher dimensions, these coupling terms couple together lines of nodes).  These coupling terms require evaluating two-point fluxes between solution states at all collocation nodes on two neighboring elements, resulting in significantly more communication and computation compared to collocation schemes based on point sets containing boundary nodes.  This work introduces efficient and entropy stable inter-element coupling terms for Gauss collocation schemes which require only communication of face values between neighboring elements.  The construction of these terms follows the framework introduced in \cite{chan2017discretely, chan2018discretely} for triangles and tetrahedra.  

The main motivation for exploring tensor product (quadrilateral and hexahedral) elements is the significant reduction in the number of operations required compared to high order entropy stable schemes on simplicial meshes \cite{chan2017discretely, chan2018discretely}.  Entropy stability schemes on simplicial elements require evaluating two-point fluxes between solution states at all quadrature points on an element.  For a degree $N$ approximation, the number of quadrature points on a simplex scales as $O(N^d)$ in $d$ dimensions, and results in $O(N^{2d})$ two-point flux evaluations per element.  In contrast, entropy stable schemes on quadrilateral and hexahedral elements require only the evaluation of two-point fluxes along lines of nodes due to a tensor product structure, resulting in $O(N^{d+1})$ evaluations in $d$ dimensions.  

In Section~\ref{sec:0}, we briefly review the derivation of continuous entropy inequalities for systems of nonlinear conservation laws.  In Section~\ref{sec:1}, we describe how to construct entropy stable discretizations of nonlinear conservation laws using different quadrature points on affine tensor product elements.  In Section~\ref{sec:2}, we describe how to extend this construction to curvilinear elements, and Section~\ref{sec:3} presents numerical results which confirm the high order accuracy and stability of the proposed method for smooth, discontinuous, and under-resolved (turbulent) solutions of the compressible Euler equations in two and three dimensions.  

\section{A brief review of entropy stability theory}
\label{sec:0}

We are interested in methods for the numerical solution of systems of conservation laws in $d$ dimensions
\begin{equation}
\pd{\bm{u}}{t} + \sum_{i=1}^d \pd{\bm{f}_i\LRp{\bm{u}}}{x_i} = 0,
\label{eq:nonlinpde}
\end{equation}
where $\bm{u}$ denotes the conservative variables, $\bm{f}_i(\bm{u})$ are nonlinear fluxes, and $x_i$ denotes the $i$th coordinate.  Many physically motivated conservation laws admit a statement of stability involving a convex scalar entropy $S(\bm{u})$.  We first define the entropy variables $\bm{v}(\bm{u})$ to be the gradient of the entropy $S(\bm{u})$ with respect to the conservative variables 
\[
\bm{v} \myeq \pd{S(\bm{u})}{\bm{u}}.  
\]
For a convex entropy, $\bm{v}(\bm{u})$ defines an invertible mapping from conservative to entropy variables.  We denote the inverse of this mapping (from entropy to conservative variables) by $\bm{u}(\bm{v})$.  

At the continuous level, it can be shown (for example, in \cite{dafermos2005compensated}) that vanishing viscosity solutions to (\ref{eq:nonlinpde}) satisfy the strong form of an entropy inequality 
\begin{gather}
\pd{S(\bm{u})}{t} + \sum_{i=1}^d \pd{F_i(\bm{u})}{x_i} \leq 0, \qquad F_i(\bm{u}) = \bm{v}^T\pd{\bm{f}_i}{\bm{u}}, 
\label{eq:entropyineq}
\end{gather}
where $F_i$ denotes the $i$th scalar entropy flux function.  Integrating (\ref{eq:entropyineq}) over a domain $\Omega$ and applying the divergence theorem yields an integrated entropy inequality
\begin{equation}
\int_{\Omega} \pd{S(\bm{u})}{t} + \int_{\partial \Omega} \sum_{i=1}^d \bm{n}_i \LRp{\bm{v}^T\bm{f}_i(\bm{u}) - \psi_i(\bm{u})} \leq 0,
\label{eq:weakentropyineq}
\end{equation}
where $\psi_i(\bm{u}) = \bm{v}^T\bm{f}_i(\bm{u}) - F_i(\bm{u})$ denotes the $i$th entropy potential, $\partial \Omega$ denotes the boundary of $\Omega$ and $\bm{n}_i$ denotes the $i$th component of the outward normal on $\partial \Omega$.  Roughly speaking, this implies that the time rate of change of entropy is less than or equal to the flux of entropy through the boundary.

\section{Entropy stable Gauss and Gauss-Legendre-Lobatto collocation methods}
\label{sec:1}

The focus of this paper is on entropy stable high order collocation methods which satisfy a semi-discrete version of the entropy inequality (\ref{eq:weakentropyineq}).  These methods collocate the solution at some choice of collocation nodes, and are applicable to tensor product meshes consisting of quadrilateral and hexahedral elements.  

Entropy stable collocation methods have largely utilized Gauss-Legendre-Lobatto (GLL) nodes \cite{fisher2013high, carpenter2014entropy, gassner2016split, gassner2017br1}, which contain points on the boundary.  The popularity of GLL nodes can be attributed in part to a connection made in \cite{gassner2013skew}, where it was shown by Gassner that collocation DG discretizations based on GLL nodes could be recast in terms of summation-by-parts (SBP) operators.  This equivalence allowed Gassner to leverage existing finite difference formulations to produce stable high order discretizations of the nonlinear Burgers' equation.  

GLL quadratures contain boundary points, which greatly simplifies the construction of inter-element coupling terms for entropy stable collocation schemes.  However, it is also known that the use of GLL quadrature within DG methods under-integrates the mass matrix, which can lead to solution ``aliasing'' and lower accuracy \cite{parsani2016entropy}.  In this work, we explore entropy stable collocation schemes based on Gauss quadrature points instead of GLL points.  

This comparison is motivated by the accuracy of each respective quadrature rule.  While $(N+1)$-point GLL quadrature rules are exact for polynomial integrands of degree $(2N-1)$, $(N+1)$-point Gauss quadrature rules are exact for polynomials of degree $(2N+1)$.  This higher accuracy of Gauss quadrature has been shown to translate to lower errors and slightly improved rates of convergence in simulations of wave propagation and fluid flow \cite{kopriva2010quadrature, hindenlang2012explicit, chan2015gpu}.  However, Gauss points have not been widely used to construct entropy stable discretizations due to the lack of efficient, stable, and high order accurate inter-element coupling terms, known as simultaneous approximation terms (SAT) in the finite difference literature \cite{fernandez2014review, crean2017high, fernandez2018simultaneous}.  SATs for Gauss points are non-compact, in the sense that they introduce all-to-all coupling between degrees of freedom on neighboring elements in one dimension.  This results in greater communication between elements, as well as a significantly larger number of two-point flux evaluations and floating point operations.  

It is possible to realize the improved accuracy of Gauss points while avoiding non-compact SATs through a staggered grid formulation, where the solution is stored at Gauss nodes but interpolated to a set of higher degree $(N+2)$ GLL ``flux'' points for computation \cite{parsani2016entropy}.  Because GLL nodes include boundary points, compact and high order accurate SAT terms can be constructed for the flux points.  After performing computations on the flux points, the results are interpolated back to Gauss points and used to evolve the solution forward in time.  Figure~\ref{fig:nodesets} shows an illustration of GLL, staggered grid, and Gauss point sets for a 2D quadrilateral element.  

\begin{figure}
\centering
\subfloat[GLL nodes]{\includegraphics[width=.3\textwidth]{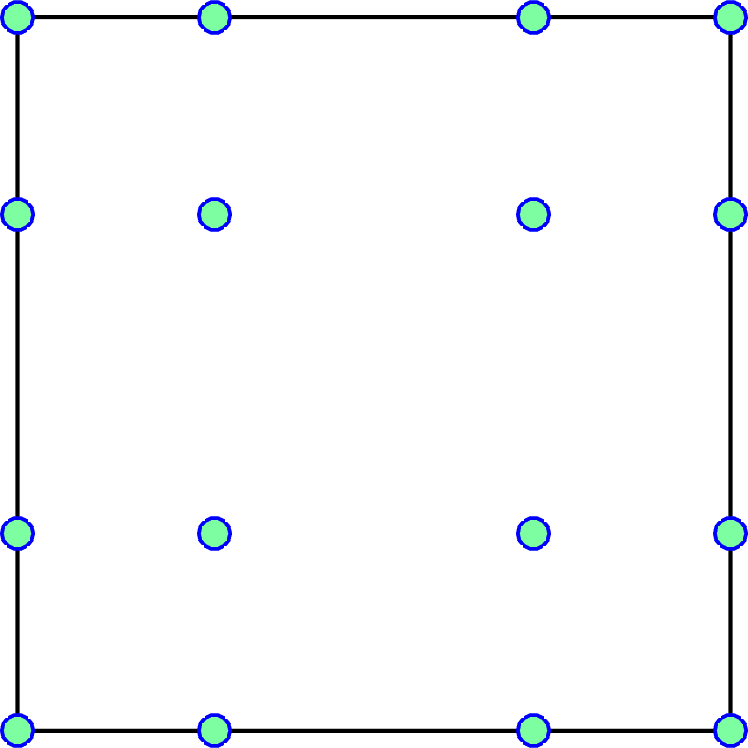}}
\hspace{.5em}
\subfloat[Staggered grid nodes]{\includegraphics[width=.3\textwidth]{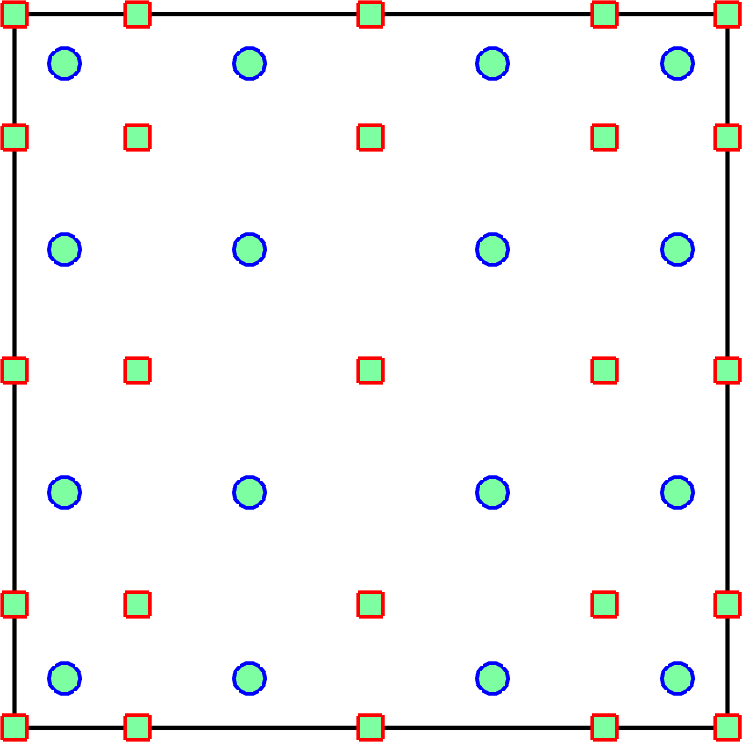}}
\hspace{.5em}
\subfloat[Gauss nodes]{\includegraphics[width=.3\textwidth]{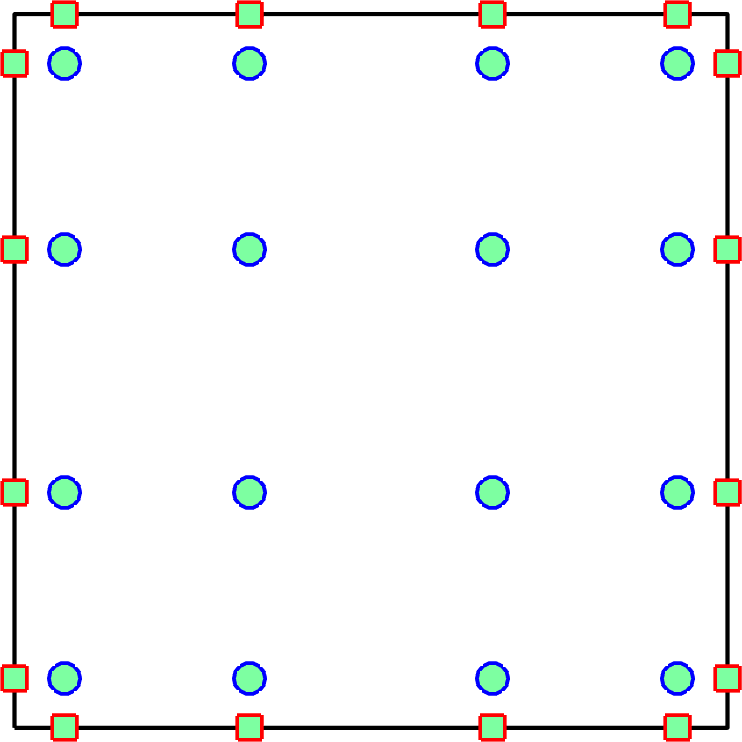}}
\caption{Examples of nodal sets under which efficient entropy stable schemes can be constructed.  This work focuses on the construction of efficient and accurate SAT terms for Gauss nodal sets.}
\label{fig:nodesets}
\end{figure}

The following sections will describe how to construct efficient high order entropy stable schemes using Gauss points.  These schemes are based on ``decoupled'' SBP operators introduced in \cite{chan2017discretely, chan2018discretely}, which are applicable to general choices of basis and quadrature.  By choosing a tensor product Lagrange polynomial basis and $(N+1)$ point Gauss quadrature rules, we recover a Gauss collocation scheme.  The high order accuracy and entropy stability of this scheme are direct results of theorems presented in  \cite{chan2017discretely, chan2018discretely}.  However, we will also present a different proof of entropy stability in one dimension for completeness.  

\subsection{Gauss nodes and generalized summation by parts operators}
\label{sec:gsbp}
We assume the solution is collocated at $(N+1)$ quadrature points $x_i$ with associated quadrature weights $w_i$.  We do not make any assumptions on the points, in order to accommodate both GLL and Gauss nodes using this notation.  The collocation assumption is equivalent to approximating the solution using a degree $N$ Lagrange basis $\ell_j(x)$ defined over the $(N+1)$ quadrature points.  

Let $\bm{D}$ denote the nodal differentiation matrix, and let $\bm{V}_f$ denote the \note{$2\times (N+1)$} matrix which interpolates polynomials at Gauss nodes to values at endpoints.  These two matrices are defined entrywise as
\[
\bm{D}_{ij} = \LRl{\pd{\ell_j}{x}}_{x = x_i}, \qquad \LRp{\bm{V}_f}_{1i} = \ell_i(-1), \qquad  \LRp{\bm{V}_f}_{2i} = \ell_i(1).
\]
We also introduce the diagonal matrix of quadrature weights $\bm{W}_{ij} = \delta_{ij} w_i$, as well as the one-dimensional mass matrix $\bm{M}$ whose entries are $L^2$ inner products of basis functions.  We assume that these inner products are computed using the quadrature rule $(x_i, w_i)$ at which the solution is collocated.  Under such an assumption, the mass matrix is diagonal with entries equal to the quadrature weights
\[
\bm{M}_{ij} = \int_{-1}^1 \ell_i(x)\ell_j(x) \approx \sum_{k=1}^{N+1} \ell_i(x_k)\ell_j(x_k) w_k = \delta_{ij} w_i = \bm{W}_{ij}.
\]
Since $\bm{M} = \bm{W}$ under a collocation assumption, we utilize $\bm{W}$ for the remainder of this work to emphasize that the mass matrix is diagonal and related to the quadrature weights $w_i$.  The treatment of non-diagonal mass matrices is covered in \cite{chan2017discretely, chan2018discretely}.

%In this work, we assume that $\bm{W}$ is identical to the mass matrix, which \note{fix and mention mass lumping} whose entries are evaluated as $L^2$ inner products of basis functions.  This equality holds so long as the basis functions are Lagrange polynomials and integrals are approximated using quadrature (either GLL or Gauss).  
%\[
%\bm{M}_{ij} = \int_{-1}^{1} \ell_j(x)\ell_i(x) = \sum_{k=1}^{N+1}  w_k\ell_j(x_k)\ell_i(x_k)  = \delta_{ij} w_i.  
%\]
It can be shown that the mass and differentiation matrices for Gauss nodes fall under the class of generalized SBP (GSBP) operators \cite{fernandez2014generalized}.  
\begin{lemma}
\label{lemma:sbp}
$\bm{Q} = \bm{W}\bm{D}$ satisfies the generalized summation by parts property
\[
\bm{Q} = \bm{V}_f^T \bm{B} \bm{V}_f - \bm{Q}^T, \qquad \bm{B} = \begin{bmatrix}-1 & \\ & 1\end{bmatrix}.
\]
\end{lemma}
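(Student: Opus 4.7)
The plan is to verify the identity entrywise by expressing everything in terms of the Lagrange basis $\{\ell_i\}$ and invoking exactness of the underlying quadrature rule. First I would write out entries of $\bm{Q}$: since $\bm{W}$ is diagonal, $Q_{ij} = w_i D_{ij} = w_i \ell_j'(x_i)$, so
\[
(\bm{Q} + \bm{Q}^T)_{ij} = w_i \ell_j'(x_i) + w_j \ell_i'(x_j).
\]
Using the collocation property $\ell_k(x_m) = \delta_{km}$, this right-hand side is exactly the $(N+1)$-point quadrature rule applied to the integrand $\ell_i'(x)\ell_j(x) + \ell_i(x)\ell_j'(x) = \bigl(\ell_i \ell_j\bigr)'(x)$.

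Next I would use exactness of the quadrature. The product $\ell_i \ell_j$ is a polynomial of degree at most $2N$, so its derivative has degree at most $2N-1$. Both $(N+1)$-point Gauss (exact through degree $2N+1$) and $(N+1)$-point GLL (exact through degree $2N-1$) quadrature integrate this exactly, so
\[
w_i \ell_j'(x_i) + w_j \ell_i'(x_j) = \int_{-1}^{1}\bigl(\ell_i \ell_j\bigr)'(x)\,dx.
\]
The fundamental theorem of calculus then gives the right boundary expression $\ell_i(1)\ell_j(1) - \ell_i(-1)\ell_j(-1)$.

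Finally I would recognize this boundary expression as the $(i,j)$ entry of $\bm{V}_f^T \bm{B}\, \bm{V}_f$: expanding the product using the definitions of $\bm{V}_f$ and $\bm{B} = \mathrm{diag}(-1,1)$ yields
\[
(\bm{V}_f^T \bm{B}\, \bm{V}_f)_{ij} = -\ell_i(-1)\ell_j(-1) + \ell_i(1)\ell_j(1),
\]
which matches. Rearranging $\bm{Q} + \bm{Q}^T = \bm{V}_f^T \bm{B}\, \bm{V}_f$ to $\bm{Q} = \bm{V}_f^T \bm{B}\, \bm{V}_f - \bm{Q}^T$ completes the proof. There is no real obstacle here; the only subtle point is checking that the quadrature degree of exactness covers $(\ell_i\ell_j)'$, which is where the GSBP property ultimately rests and is the reason both Gauss and GLL nodes fall within the same framework.
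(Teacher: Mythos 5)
Your proof is correct and is exactly the argument the paper alludes to but defers to its references (``a direct restatement of integration by parts''): discrete integration by parts made exact because $(\ell_i\ell_j)'$ has degree at most $2N-1$, which both $(N+1)$-point Gauss and GLL rules integrate exactly. The entrywise identification of $\bm{Q}+\bm{Q}^T$ with the quadrature applied to $(\ell_i\ell_j)'$ and of the boundary term with $\bm{V}_f^T\bm{B}\bm{V}_f$ are both verified correctly, so nothing is missing.
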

%\begin{proof}
The proof is a direct restatement of integration by parts, and can be found in \cite{fernandez2014generalized, ortleb2016kinetic, ortleb2017kinetic, ranocha2018generalised}.  
%We reproduce it here for completeness.  Let $\bm{u},\bm{v}$ be two arbitrary vectors, and let $u(x), v(x)$ denote the polynomials whose nodal values are given by $\bm{u},\bm{v}$.  Then, 
%\begin{align*}
%\bm{v}^T\bm{Q}\bm{u} &= \bm{v}^T\bm{W}\bm{D}\bm{u} = \int_{-1}^1 \pd{u}{x}v = \LRl{uv}_{-1}^1 - \int_{-1}^1 u\pd{v}{x} \\
%&= \LRp{\bm{V}_f \bm{v}}^T\bm{B}\bm{V}_f\bm{u} - \LRp{\bm{D}\bm{v}}^T\bm{W}\bm{u} = \bm{v}^T\bm{V}_f^T\bm{B}\bm{V}_f \bm{u} - \bm{v}^T\bm{Q}^T\bm{u},
%\end{align*}
%where we have used that $\bm{W}$ is diagonal and that $(N+1)$-point Gauss quadrature is exact for the above integrands.  Taking $\bm{v} = \bm{e}_i$ and $\bm{u} = \bm{e}_j$ for $i,j = 1,\ldots, N+1$ (where $\bm{e}_i$ denotes the $i$th canonical vector) completes the proof.  
%\end{proof}
Lemma~\ref{lemma:sbp} holds for both GLL and Gauss nodes, and switching between these two nodal sets simply results in a redefinition of the matrices $\bm{D}, \bm{V}_f$.  For example, because GLL nodes include boundary points, the interpolation matrix $\bm{V}_f$ reduces to a generalized permutation matrix which extracts the nodal values associated with the left and right endpoints.  %Lemma~\ref{lemma:sbp} can be extended directly to higher dimensions through a tensor product construction.  

%However, there are some properties which are exclusive to Gauss nodes.  
%\begin{lemma}
%Let $p(x)$ be a linear function with Gauss nodal values $\bm{p}$.  Then, the Gauss node operators satisfy the following summation by parts property
%\[
%\diag{\bm{p}} \bm{Q} = \bm{V}_f^T \bm{B}\diag{\bm{V}_f\bm{p}} \bm{V}_f - \bm{Q}^T\diag{\bm{p}},
%\]
%where $\bm{B}$ is the boundary matrix introduced in Lemma~\ref{lemma:sbp}.
%\label{lemma:vertexmapped}
%\end{lemma}
%\begin{proof}
%Due to the fact that Gauss quadrature is exact for polynomial integrands of degree $(2N+1)$, the proof is very similar to that of Lemma~\ref{lemma:sbp}.  Let $\bm{u}, \bm{v}$ be vectors and let $u(x),v(x)$ denote polynomials with associated nodal values.  Then, 
%\begin{align*}
%\bm{v}^T\diag{\bm{p}} \bm{Q}\bm{u} &= \int_{-1}^1 \pd{u}{x} pv = 
%\LRl{ u pv}_{-1}^1 - \int_{-1}^1 u \pd{(pv)}{x}.
%\end{align*}
%The latter integrand is a polynomial of degree $2N$
%\begin{align*}
%&= \LRp{\bm{V}_f\bm{v}}^T \bm{B} \diag{\bm{V}_f\bm{p}} \bm{V}_f\bm{u}  - \LRp{\bm{D}\diag{\bm{p}}\bm{v}}^T\bm{W}\bm{u}.
%\end{align*}
%\end{proof}
%
%Both Lemma~\ref{lemma:sbp} and \ref{lemma:vertexmapped} extend directly to higher dimensions through tensor products.  

\subsection{Existing entropy stable SATs for generalized SBP operators}
\label{sec:gsbpsat}

In this section, we will review the construction of semi-discretely entropy stable discretizations.  Entropy stable discretizations can be constructed by first introducing an entropy conservative scheme, then adding appropriate interface dissipation to produce an entropy inequality.  The construction of entropy conservative schemes relies on the existence of an two-point (dyadic) entropy conservative flux \cite{tadmor1987numerical}.  
\begin{definition}
\label{def:tadmor}
Let $\bm{f}_S(\bm{u}_L,\bm{u}_R)$ be a bivariate function which is symmetric and consistent with the flux function $\bm{f}(\bm{u})$
\begin{align*}
\bm{f}_S(\bm{u}_L,\bm{u}_R) = \bm{f}_S(\bm{u}_R,\bm{u}_L), \qquad \bm{f}_S(\bm{u},\bm{u}) = \bm{f}(\bm{u})
\end{align*}
The numerical flux $\bm{f}_S(\bm{u}_L, \bm{u}_R)$ is entropy conservative if, for entropy variables $\bm{v}_L = \bm{v}(\bm{u}_L), \bm{v}_R = \bm{v}(\bm{u}_R)$, the Tadmor ``shuffle'' condition is satisfied
\begin{align*}
\LRp{\bm{v}_L - \bm{v}_R}^T \bm{f}_S(\bm{u}_L,\bm{u}_R) = (\psi_L - \psi_R), \qquad \psi_L = \psi(\bm{v}(\bm{u}_L)), \quad \psi_R = \psi(\bm{v}(\bm{u}_R)).  
\end{align*}
\end{definition}

For illustrative purposes, we will prove a semi-discrete entropy inequality on a one-dimensional mesh consisting of two elements of degree $N$.  We assume both meshes are translations of a reference element  $[-1,1]$, such that derivatives with respect to physical coordinates are identical to derivatives with respect to reference coordinates.  The extension to multiple elements and variable mesh sizes is straightforward.  

The construction of entropy conservative schemes relies on appropriate SATs for Gauss collocation schemes \cite{fernandez2014review, crean2017high, fernandez2018simultaneous}.  Let the rows of $\bm{V}_f$ be denoted by column vectors $\bm{t}_L, \bm{t}_R$ 
\[
\bm{V}_f = \begin{bmatrix}
%\horzbar & \bm{t}_L^T & \horzbar\\
%\horzbar & \bm{t}_R^T & \horzbar
\LRp{\bm{t}_L}_1, & \ldots, & \LRp{\bm{t}_L}_{N+1}\\
\LRp{\bm{t}_R}_1, & \ldots, & \LRp{\bm{t}_R}_{N+1}
\end{bmatrix}, \qquad \LRp{\bm{t}_L}_j = \ell_j(-1), \qquad \LRp{\bm{t}_R}_j = \ell_j(1).
\]
The inter-element coupling terms in \cite{fernandez2014review, crean2017high, fernandez2018simultaneous} utilize a decomposition of the surface matrix $\bm{V}_f^T\bm{B}\bm{V}_f$ as  
\begin{equation}
\bm{V}_f^T\bm{B}\bm{V}_f 
= \bm{t}_R\bm{t}_R^T - \bm{t}_L\bm{t}_L^T.
%, \qquad \bm{V}_f = \begin{bmatrix}
%\bm{t}_L & \bm{t}_R
%\end{bmatrix}
\label{eq:bmatdecomp}
\end{equation}
The construction of entropy conservative schemes on multiple elements requires appropriate inter-element coupling terms (SATs) involving $\bm{t}_L, \bm{t}_R$.  We consider a two element mesh, and show how when coupled with SATs, the resulting discretization matrices can be interpreted as constructing a global SBP operator.  

Let $\bm{u}^1_N, \bm{u}^2_N$ denote nodal degrees of freedom of the vector valued solution $\bm{u}(x)$ on the first and second element, respectively.  To simplify notation, we assume that all following operators are defined in terms of Kronecker products \cite{chen2017entropy}, such that they are applied to each component of $\bm{u}^1_N, \bm{u}^2_N$.  
We first define the matrix 
\begin{equation}
\bm{S} \myeq \bm{Q} - \frac{1}{2}\bm{V}_f^T\bm{B}\bm{V}_f.
\label{eq:Smat}
\end{equation}
It is straightforward to show (using Lemma~\ref{lemma:sbp}) that $\bm{S}$ is skew-symmetric.  We can now define an SBP operator $\bm{D}_h = \bm{W}_h^{-1}\bm{Q}_h$ over two elements
\begin{equation}
\bm{Q}_{h} \myeq 
\underbrace{\begin{bmatrix}
\bm{S}  & \frac{1}{2}\bm{t}_R\bm{t}_L^T \\
- \frac{1}{2}\bm{t}_L\bm{t}_R^T & \bm{S}
\end{bmatrix}}_{\bm{S}_h}
+
\underbrace{\begin{bmatrix}
-\frac{1}{2}\bm{t}_L\bm{t}_L^T  & \\
 & \frac{1}{2} \bm{t}_R\bm{t}_R^T
\end{bmatrix}}_{\frac{1}{2}\bm{B}_h}
, \qquad 
\bm{W}_h \myeq \begin{bmatrix}\bm{W} &\\
& \bm{W}
\end{bmatrix}.
%\bm{D}_h = \begin{bmatrix}\bm{W}^{-1} &\\
%& \bm{W}^{-1}
%\end{bmatrix}
% \bm{Q}_h.
\label{eq:gsbp}
\end{equation}
It can be shown that $\bm{D}_h$ is high order accurate such that, if $\bm{u}_h$ is a polynomial of degree $N$, it is differentiated exactly.  Straightforward computations show that $\bm{Q}_h$ also satisfies an SBP property $\bm{Q}_h + \bm{Q}_h^T = \bm{B}_h$.
%\[
%\bm{Q}_h + \bm{Q}_h^T = \bm{B}_h.
%\begin{bmatrix}
%-\bm{t}_L\bm{t}_L  & \\
% & \bm{t}_R\bm{t}_R
%\end{bmatrix}.
%\]

Ignoring boundary conditions, an entropy conservative scheme for (\ref{eq:nonlinpde}) on two elements can then be given as
\begin{gather}
\td{}{t}\bm{W}_h\bm{u}_h + 2\LRp{\bm{Q}_h \circ \bm{F}_S}\bm{1} = 0, \qquad \bm{u}_h = \begin{bmatrix}
\bm{u}^1_N\\
\bm{u}^2_N
\end{bmatrix} \label{eq:ESgsbp} \\
\LRp{\bm{F}_S}_{ij} = \bm{f}_S\LRp{\LRp{\bm{u}_h}_i,\LRp{\bm{u}_h}_j}, \qquad 1 \leq  i,j \leq 2(N+1), \nonumber
\end{gather}
where $\circ$ denotes the Hadamard product \cite{horn2012matrix}.  {It should be emphasized that here, $(\bm{u}_h)_i, (\bm{u}_h)_j$ denote vectors containing solution components at nodes $i,j$, and that (because $\bm{f}_S$ is a vector-valued flux) the term $\LRp{\bm{F}_S}_{ij}$ should be interpreted as a diagonal matrix whose diagonal entries consist of the components of $\bm{f}_S\LRp{(\bm{u}_h)_i, (\bm{u}_h)_j}$.}

%\note{Note that $\bm{F}_S$ is a tensor \ldots add diag notation assuming a certain ordering of degrees of freedom.}
Multiplying (\ref{eq:ESgsbp}) by $\bm{v}_h^T = \bm{v}\LRp{\bm{u}_h}^T$ 
%\begin{align*}
%\bm{v}_h^T\td{}{t}\bm{W}_h\bm{u}_h + 2\bm{v}_h^T\LRp{\bm{Q}_h \circ \bm{F}_S}\bm{1} = 0
%\end{align*}
will yield a semi-discrete version of the conservation of entropy (mimicking (\ref{eq:weakentropyineq}) with the inequality replaced by an equality)
\begin{equation}
\label{eq:consentropyGSBP}
\td{}{t}\bm{W}_h S(\bm{u}_h) + \bm{v}_h^T\LRp{\bm{B}_h\circ \bm{F}_S}\bm{1} -\bm{1}^T\bm{B}_h\psi\LRp{\bm{u}_h} = 0.
\end{equation}
We refer to \cite{crean2017high, crean2018entropy} for the proof of (\ref{eq:consentropyGSBP}).  

The drawback of the SATs introduced in this section lies in the nature of the off-diagonal matrices $\bm{t}_R\bm{t}_L$ and $-\bm{t}_L\bm{t}_R$.  For Gauss nodes, these blocks are dense, which implies that inter-element coupling terms produce a non-compact stencil.  Evaluating (\ref{eq:ESgsbp}) requires computing two-point fluxes $\bm{f}_S$ between all nodes on two neighboring elements, which significantly increases both the computational work, as well as communication between neighboring elements.  This leads to all-to-all coupling between degrees of freedom in 1D, and to coupling along one-dimensional lines of nodes in higher dimensions due to the tensor product structure.  

The main goal of this work is to circumvent this tighter coupling of degrees of freedom introduced by the SATs described in this section, which can be done through the use of ``decoupled'' SBP operators.  

\subsection{Decoupled SBP operators}

Decoupled SBP operators were first introduced in \cite{chan2017discretely} and used to construct entropy stable schemes on simplicial elements.  These operators (and simplifications under a collocation assumption) are presented in a more general setting in \cite{chan2017discretely, chan2018discretely} and in Appendix~\ref{app:decoupled}.  In this section, decoupled SBP operators are introduced in one dimension for GLL and Gauss nodal sets.  

Decoupled SBP operators build upon the GSBP matrices $\bm{W}, \bm{Q}$, interpolation matrix $\bm{V}_f$, and boundary matrix $\bm{B}$ introduced in Section~\ref{sec:gsbp}.  The decoupled SBP operator $\bm{Q}_N$ is defined as the block matrix 
\begin{equation}
\bm{Q}_N \myeq \begin{bmatrix}
\bm{Q} - \frac{1}{2}\bm{V}_f^T\bm{B}\bm{V}_f & \frac{1}{2}\bm{V}_f^T\bm{B}\\
-\frac{1}{2}\bm{B}\bm{V}_f & \frac{1}{2}\bm{B}
\end{bmatrix}.
\label{eq:qndef}
\end{equation}
Lemma~\ref{lemma:sbp} and straightforward computations show that $\bm{Q}_N$ also satisfies the following SBP property
\begin{lemma}
\label{lemma:dsbp}
Let $\bm{Q}_N$ be defined through (\ref{eq:qndef}).  Then,
\[
\bm{Q}_N + \bm{Q}_N^T = \begin{bmatrix}
\bm{0} &\\
& \bm{B}
\end{bmatrix}.
\]
\end{lemma}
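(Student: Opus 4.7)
My plan is to verify the identity by a direct blockwise computation of $\bm{Q}_N + \bm{Q}_N^T$, exploiting (i) the symmetry of the boundary matrix $\bm{B} = \operatorname{diag}(-1,1)$ and (ii) the generalized SBP property $\bm{Q} + \bm{Q}^T = \bm{V}_f^T \bm{B} \bm{V}_f$ from Lemma~\ref{lemma:sbp}. No structural insight is needed beyond these two facts; the proof is a bookkeeping exercise.

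Concretely, I would write $\bm{Q}_N^T$ as the block matrix obtained by transposing each of the four blocks in the definition~(\ref{eq:qndef}) and swapping the off-diagonal positions, then add it to $\bm{Q}_N$ one block at a time. The $(1,1)$ block becomes $\bm{Q} + \bm{Q}^T - \bm{V}_f^T \bm{B} \bm{V}_f$, which vanishes by Lemma~\ref{lemma:sbp} since $\bm{B}^T = \bm{B}$ allows the two surface contributions to combine. For the off-diagonal blocks, the $(1,2)$ entry of $\bm{Q}_N + \bm{Q}_N^T$ is $\tfrac{1}{2}\bm{V}_f^T\bm{B} + (-\tfrac{1}{2}\bm{B}\bm{V}_f)^T = \tfrac{1}{2}\bm{V}_f^T \bm{B} - \tfrac{1}{2}\bm{V}_f^T\bm{B}^T$, which is zero because $\bm{B}$ is symmetric; the $(2,1)$ block vanishes by the same argument. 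Finally, the $(2,2)$ block is $\tfrac{1}{2}\bm{B} + \tfrac{1}{2}\bm{B}^T = \bm{B}$, completing the identification with the right-hand side.

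There is no genuine obstacle here: the only place where something nontrivial happens is the $(1,1)$ block, and this is precisely where Lemma~\ref{lemma:sbp} does the work. The remaining three blocks simplify purely by the symmetry of $\bm{B}$. I would present the argument as a short four-way case split on the block positions, and explicitly flag the single invocation of Lemma~\ref{lemma:sbp} so that the reader sees that decoupled SBP operators inherit their boundary behavior directly from the underlying GSBP operator $\bm{Q}$.
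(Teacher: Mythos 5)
Your blockwise computation is correct and is precisely the ``straightforward computation'' the paper invokes: the $(1,1)$ block vanishes via Lemma~\ref{lemma:sbp} (i.e.\ $\bm{Q}+\bm{Q}^T=\bm{V}_f^T\bm{B}\bm{V}_f$), and the remaining blocks simplify by the symmetry of $\bm{B}$. This matches the paper's intended argument, so there is nothing to add.
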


We note that the matrix $\bm{Q}_N$ acts not only on volume nodes, but on both volume and surface nodes.  Thus, it is not immediately clear how to apply this operator to GSBP discretizations of nonlinear conservation laws.  It is straightforward to evaluate the nonlinear flux at volume nodes since the solution is collocated at these points; however, evaluating the nonlinear flux at surface nodes is less straightforward.  Moreover, $\bm{Q}_N$ does not directly define a difference operator, and must be combined with another operation to produce an approximation to the derivative.  We will discuss how to apply $\bm{Q}_N$ in two steps.  First, we will show how to approximate the derivative of an arbitrary function using $\bm{Q}_N$ given function values at both volume and surface nodes.  Then, we will describe how to apply this approximation to compute derivatives of nonlinear flux functions given collocated solution values at volume nodes.    

Let $f(x), g(x)$ denote two functions, and let $\bm{f}, \bm{g}$ denote the values of $f,g$ at interior nodal points.  We also define vectors $\bm{f}_N, \bm{g}_N$ denoting the values of $f,g$ at both interior and boundary points 
\begin{equation}
\bm{f}_N = \begin{bmatrix}
f\LRp{x_1}\\
\vdots\\
f\LRp{x_{N+1}}\\
f(-1)\\
f(1)
\end{bmatrix} = \begin{bmatrix}
\bm{f} \\
\bm{f}_f
\end{bmatrix}, \qquad
\bm{g}_N = \begin{bmatrix}
g\LRp{x_1}\\
\vdots\\
g\LRp{x_{N+1}}\\
g(-1)\\
g(1)
\end{bmatrix} = \begin{bmatrix}
\bm{g} \\
\bm{g}_f
\end{bmatrix}.
\label{eq:fg}
\end{equation}
Then, a polynomial approximation to $f\pd{g}{x}$ can be computed using $\bm{Q}_N$.  Let $\bm{u}$ denote the nodal values of the polynomial $u(x) \approx f\pd{g}{x}$.  These coefficients are computed via
\begin{equation}
\bm{W}\bm{u} = \begin{bmatrix}
\bm{I}\\
\bm{V}_f
\end{bmatrix}^T \diag{\bm{f}_N} \bm{Q}_N \bm{g}_N.
\label{eq:qn}
\end{equation}
The approximation (\ref{eq:qn}) can be rewritten in ``strong'' form as follows
\begin{align*}
\bm{u} = \bm{W}^{-1}\begin{bmatrix}
\bm{I}\\
\bm{V}_f
\end{bmatrix}^T& \diag{\bm{f}_N} \bm{Q}_N \bm{g}_N \\
= \diag{\bm{f}}\bm{D}\bm{g} &+ \frac{1}{2}\diag{\bm{f}} \bm{W}^{-1} \bm{V}_f^T \bm{B}\LRp{\bm{g}_f - \bm{V}_f\bm{g}} \\
&+ \frac{1}{2}\bm{W}^{-1} \bm{V}_f^T\bm{B}\diag{\bm{f}_f} \LRp{\bm{g}_f - \bm{V}_f\bm{g}},
\end{align*}
where we have used the fact that diagonal matrices commute to simplify expressions.  The decoupled SBP operator $\bm{Q}_N$ can thus be interpreted as adding boundary corrections to the GSBP operator $\bm{D}$ in a skew-symmetric fashion.  \note{More specifically, the expression (\ref{eq:qn}) corresponds to a quadrature approximation of the following variational approximation of the derivative \cite{chan2017discretely}: find $u\in P^N([-1,1])$ such that},
\[
\note{\int_{-1}^1u({x})v({x})  = \int_{-1}^1 {I_N f\pd{I_Ng}{x}v} + \LRl{(g-I_Ng)\frac{\LRp{fv + I_N(fv)}}{2}}_{-1}^1,} \qquad \forall v\in P^N([-1,1]),
\]
\note{where $I_N$ denotes the degree $N$ polynomial approximation at the $(N+1)$ Gauss points.  }

The approximation (\ref{eq:qn}) can also be applied to Gauss collocation schemes for nonlinear conservation laws. Let $u \in P^N$ be represented by the vector $\bm{u}$ of values at Gauss points, and let $f(x), g(x)$ denote two nonlinear functions.   The operator $\bm{Q}_N$ can be used to approximate the quantity $f(u)\pd{g(u)}{x}$ using (\ref{eq:qn}) if $\bm{f}, \bm{g}$ are defined as
\[
\bm{f}_N = \begin{bmatrix}
f\LRp{\bm{u}_1}\\
\vdots\\
f\LRp{\bm{u}_{N+1}}\\
f\LRp{\bm{t}_L^T\bm{u}}\\
f\LRp{\bm{t}_R^T\bm{u}}
\end{bmatrix} = \begin{bmatrix}
f(\bm{u}) \\
f(\bm{u}_f)
\end{bmatrix}, \qquad
\bm{g}_N = \begin{bmatrix}
g\LRp{\bm{u}_1}\\
\vdots\\
g\LRp{\bm{u}_{N+1}}\\
g\LRp{\bm{t}_L^T\bm{u}}\\
g\LRp{\bm{t}_R^T\bm{u}}
\end{bmatrix} = \begin{bmatrix}
g(\bm{u}) \\
g(\bm{u}_f)
\end{bmatrix}, \qquad \bm{u}_f = \bm{V}_f\bm{u}.
\]
%The key observation is that (\ref{eq:qn}) utilizes new information 

It was shown in \cite{chan2017discretely} that $\bm{u}$ is a high order accurate approximation to the quantity $f\pd{g}{x}$.  Both the generalized SBP operator $\bm{D}$ and the expression in (\ref{eq:qn}) involving the decoupled SBP operator recover exact derivatives of high order polynomials.  However, when applied to non-polynomial functions, the decoupled SBP operator $\bm{Q}_N$ improves accuracy near the boundaries.  Figure~\ref{fig:dsbpcorrect} illustrates this by using both operators to approximate the derivative of a Gaussian $e^{-4x^2}$ on $[-1,1]$.  The decoupled SBP operator results in an improved approximation at all orders of approximation.  %This can be interpreted as  \cite{brenner2007mathematical}

\begin{figure}
\centering
\subfloat[$N=5$ approximation]{
\label{subfig:dsbp1}
\begin{tikzpicture}
\begin{axis}[
    width=.51\textwidth,
    xlabel={$x$ coordinate},
    ymin=-2, ymax=2.5,
    legend pos=north east, legend cell align=left, legend style={font=\tiny},	
    xmajorgrids=true, ymajorgrids=true, grid style=dashed,
    legend entries={GSBP, Decoupled, Exact}    
]
\pgfplotsset{
cycle list={{blue, only marks, mark=*}, {blue}, {red, only marks,mark=square*},{red},{black,dashed}}
}
\addlegendimage{blue, mark=*}
\addlegendimage{red, mark=square*}
\addlegendimage{black, dashed}

\addplot+[semithick, mark options={solid, fill=markercolor}]
coordinates{(-0.93247,-0.677965)(-0.661209,1.35632)(-0.238619,1.07263)(0.238619,-1.07263)(0.661209,-1.35632)(0.93247,0.677965)};

\addplot+[semithick, mark options={solid, fill=markercolor}]
coordinates{(-1,-1.56577)(-0.959184,-1.00891)(-0.918367,-0.51366)(-0.877551,-0.0774098)(-0.836735,0.302464)(-0.795918,0.628585)(-0.755102,0.903575)(-0.714286,1.13006)(-0.673469,1.31065)(-0.632653,1.44798)(-0.591837,1.54466)(-0.55102,1.60333)(-0.510204,1.62659)(-0.469388,1.61708)(-0.428571,1.57742)(-0.387755,1.51022)(-0.346939,1.41812)(-0.306122,1.30372)(-0.265306,1.16966)(-0.22449,1.01856)(-0.183673,0.85303)(-0.142857,0.675704)(-0.102041,0.489201)(-0.0612245,0.296143)(-0.0204082,0.0991513)(0.0204082,-0.0991513)(0.0612245,-0.296143)(0.102041,-0.489201)(0.142857,-0.675704)(0.183673,-0.85303)(0.22449,-1.01856)(0.265306,-1.16966)(0.306122,-1.30372)(0.346939,-1.41812)(0.387755,-1.51022)(0.428571,-1.57742)(0.469388,-1.61708)(0.510204,-1.62659)(0.55102,-1.60333)(0.591837,-1.54466)(0.632653,-1.44798)(0.673469,-1.31065)(0.714286,-1.13006)(0.755102,-0.903575)(0.795918,-0.628585)(0.836735,-0.302464)(0.877551,0.0774098)(0.918367,0.51366)(0.959184,1.00891)(1,1.56577)};

\addplot+[semithick, mark options={solid, fill=markercolor}]
coordinates{(-0.93247,-0.194074)(-0.661209,1.16978)(-0.238619,1.20913)(0.238619,-1.20913)(0.661209,-1.16978)(0.93247,0.194074)};

\addplot+[semithick, mark options={solid, fill=markercolor}]
coordinates{(-1,-0.434491)(-0.959184,-0.303266)(-0.918367,-0.130624)(-0.877551,0.0698403)(-0.836735,0.286078)(-0.795918,0.507515)(-0.755102,0.724981)(-0.714286,0.930644)(-0.673469,1.11794)(-0.632653,1.28151)(-0.591837,1.41712)(-0.55102,1.5216)(-0.510204,1.59278)(-0.469388,1.62942)(-0.428571,1.63111)(-0.387755,1.59825)(-0.346939,1.53198)(-0.306122,1.43404)(-0.265306,1.30681)(-0.22449,1.15314)(-0.183673,0.976347)(-0.142857,0.78012)(-0.102041,0.568459)(-0.0612245,0.345602)(-0.0204082,0.115959)(0.0204082,-0.115959)(0.0612245,-0.345602)(0.102041,-0.568459)(0.142857,-0.78012)(0.183673,-0.976347)(0.22449,-1.15314)(0.265306,-1.30681)(0.306122,-1.43404)(0.346939,-1.53198)(0.387755,-1.59825)(0.428571,-1.63111)(0.469388,-1.62942)(0.510204,-1.59278)(0.55102,-1.5216)(0.591837,-1.41712)(0.632653,-1.28151)(0.673469,-1.11794)(0.714286,-0.930644)(0.755102,-0.724981)(0.795918,-0.507515)(0.836735,-0.286078)(0.877551,-0.0698403)(0.918367,0.130624)(0.959184,0.303266)(1,0.434491)};

\addplot+[semithick, mark options={solid, fill=markercolor}]
coordinates{(-1,0.146525)(-0.959184,0.193522)(-0.918367,0.251752)(-0.877551,0.322529)(-0.836735,0.406852)(-0.795918,0.50522)(-0.755102,0.617438)(-0.714286,0.742415)(-0.673469,0.877994)(-0.632653,1.02082)(-0.591837,1.1663)(-0.55102,1.30861)(-0.510204,1.44089)(-0.469388,1.55553)(-0.428571,1.64452)(-0.387755,1.70003)(-0.346939,1.71492)(-0.306122,1.68342)(-0.265306,1.60163)(-0.22449,1.46805)(-0.183673,1.2839)(-0.142857,1.05327)(-0.102041,0.783025)(-0.0612245,0.482507)(-0.0204082,0.162994)(0.0204082,-0.162994)(0.0612245,-0.482507)(0.102041,-0.783025)(0.142857,-1.05327)(0.183673,-1.2839)(0.22449,-1.46805)(0.265306,-1.60163)(0.306122,-1.68342)(0.346939,-1.71492)(0.387755,-1.70003)(0.428571,-1.64452)(0.469388,-1.55553)(0.510204,-1.44089)(0.55102,-1.30861)(0.591837,-1.1663)(0.632653,-1.02082)(0.673469,-0.877994)(0.714286,-0.742415)(0.755102,-0.617438)(0.795918,-0.50522)(0.836735,-0.406852)(0.877551,-0.322529)(0.918367,-0.251752)(0.959184,-0.193522)(1,-0.146525)};
\end{axis}
\end{tikzpicture}
}
\subfloat[$L^2$ errors]{
\label{subfig:dsbp2}
\begin{tikzpicture}
\begin{semilogyaxis}[
    width=.51\textwidth,
    xlabel={Degree $N$},
    ylabel={$L^2$ errors}, 
    legend pos=north east, legend cell align=left, legend style={font=\tiny},	
    xmajorgrids=true, ymajorgrids=true, grid style=dashed,
    legend entries={GSBP, Decoupled SBP}    
]
\pgfplotsset{
cycle list={{blue, mark=*}, {red, mark=square*}}
}
%\addlegendimage{blue, mark=*}
%\addlegendimage{red, mark=square*}
%\addlegendimage{black, dashed}

\addplot+[semithick, mark options={solid, fill=markercolor}]
coordinates{(1,1.67087)(2,1.84788)(3,1.29689)(4,1.14406)(5,0.67189)(6,0.4428)(7,0.242532)(8,0.124111)(9,0.0657786)(10,0.0272413)(11,0.0141812)(12,0.00491087)(13,0.00252903)(14,0.000750851)(15,0.000383985)};

\addplot+[semithick, mark options={solid, fill=markercolor}]
coordinates{(1,1.37796)(2,1.23235)(3,0.769706)(4,0.682916)(5,0.348496)(6,0.252957)(7,0.120617)(8,0.0691975)(9,0.0323286)(10,0.0149469)(11,0.00695358)(12,0.00266351)(13,0.00124093)(14,0.000403653)(15,0.000188708)};

\end{semilogyaxis}
\end{tikzpicture}

}

\caption{Approximations of derivatives of a Gaussian $e^{-4x^2}$ using the generalized SBP operator $\bm{D}$ and the decoupled SBP operator $\bm{Q}_N$ via (\ref{eq:qn}).  In Figure~\ref{subfig:dsbp1}, the colored circles and squares denote values at Gauss points for a degree $N = 5$ approximation.  Figure~\ref{subfig:dsbp2} shows the convergence of $L^2$ errors as $N$ increases. }
\label{fig:dsbpcorrect}
\end{figure}
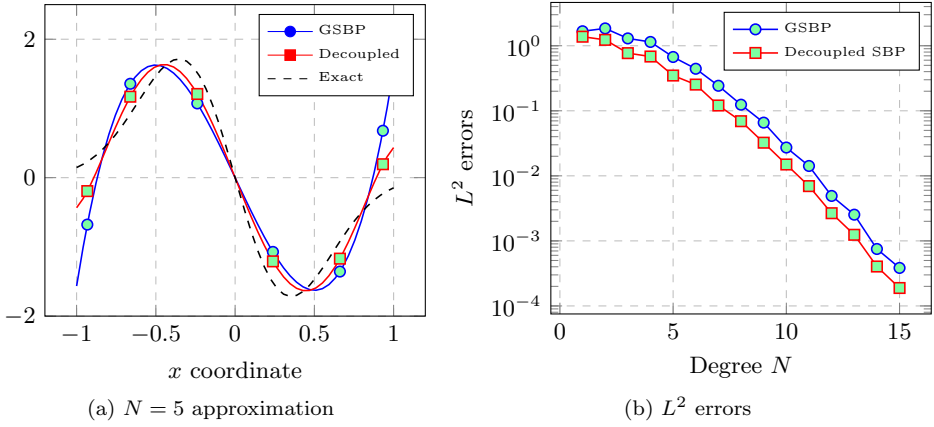

\subsection{An entropy stable Gauss collocation scheme based on decoupled SBP operators}

We can now construct an entropy conservative Gauss collocation scheme with compact SATs using decoupled SBP operators.  As in Section~\ref{sec:gsbpsat}, we will construct a Gauss collocation scheme and provide a proof of semi-discrete entropy conservation for a two-element mesh.  

We first note that $\bm{B}$ can be trivially decomposed into the sum of two outer products as in (\ref{eq:bmatdecomp})
\begin{equation}
\bm{B} = \begin{bmatrix}
-1 & 0 \\
0 & 1
\end{bmatrix} = \bm{e}_R\bm{e}_R^T - \bm{e}_L\bm{e}_L^T, \qquad \bm{e}_L = \begin{bmatrix} 1\\0\end{bmatrix}, \qquad \bm{e}_R = \begin{bmatrix} 0\\1\end{bmatrix}.
\label{eq:bmatdecomp2}
\end{equation}
The vectors $\bm{e}_L, \bm{e}_R$ are related to $\bm{t}_L, \bm{t}_R$ through the interpolation matrix $\bm{V}_f$.  Because $\bm{t}_L, \bm{t}_R$ are rows of $\bm{V}_f$, $\bm{t}_L\bm{1} = \bm{t}_R\bm{1} = 1$.  This can be used to show, for example, that
\begin{equation}
\label{eq:et}
\bm{B}\bm{V}_f\bm{1} = \LRp{\bm{e}_R\bm{e}_R^T - \bm{e}_L\bm{e}_L^T}\bm{1} = \bm{e}_R-\bm{e}_L.
\end{equation}

We can define a decoupled SBP matrix $\bm{Q}_h$ over two elements as follows
\begin{align}
\bm{Q}_h \myeq& \LRs{\begin{array}{cc|cc}
\bm{S} &  \frac{1}{2}\bm{V}_f^T\bm{B} & & \\
 -\frac{1}{2}\bm{B}\bm{V}_f & -\frac{1}{2}\bm{e}_L\bm{e}_L^T & & \frac{1}{2}\bm{e}_R\bm{e}_L^T\\ [.2em]
 \hline\\[-1.0em]
&& \bm{S} & \frac{1}{2}\bm{V}_f^T\bm{B} \\
& -\frac{1}{2}\bm{e}_L\bm{e}_R^T & - \frac{1}{2}\bm{B}\bm{V}_f & \frac{1}{2}\bm{e}_R\bm{e}_R^T
\end{array}}, \label{eq:decoupledsbp2elem}
\end{align}
where we have abused notation and redefined $\bm{Q}_h$.  We can also show that $\bm{Q}_h\bm{1} = 0$.  Using (\ref{eq:et}), we have that
\begin{align}
\bm{Q}_h\bm{1} &= \begin{bmatrix}
\LRp{\bm{S} + \frac{1}{2}\bm{V}_f^T\bm{B} }\bm{1}\\
-\frac{1}{2}\bm{B}\bm{V}_f\bm{1} + \frac{1}{2}\LRp{\bm{e}_R-\bm{e}_L}\\
\LRp{\bm{S} + \frac{1}{2}\bm{V}_f^T\bm{B} }\bm{1}\\
-\frac{1}{2}\bm{B}\bm{V}_f\bm{1} + \frac{1}{2}\LRp{\bm{e}_R-\bm{e}_L}
\end{bmatrix} 
= 
\begin{bmatrix}
\LRp{\bm{Q} - \frac{1}{2}\bm{V}_f^T\bm{B}\bm{V}_f + \frac{1}{2}\bm{V}_f^T\bm{B}\bm{V}_f}\bm{1}\\
\frac{1}{2}\bm{B}\LRp{-\bm{1} + \bm{1}}\\
\LRp{\bm{Q} - \frac{1}{2}\bm{V}_f^T\bm{B}\bm{V}_f + \frac{1}{2}\bm{V}_f^T\bm{B}\bm{V}_f}\bm{1}\\
\frac{1}{2}\bm{B}\LRp{-\bm{1} + \bm{1}}\\
\end{bmatrix} = \bm{0}.
\label{eq:Q1zero}
\end{align}
Here, we have used the definition of $\bm{B}$ in (\ref{eq:bmatdecomp2}), the fact that $\bm{V}_f\bm{1} = \bm{1}$ and $\bm{Q}\bm{1} = 0$ \cite{fernandez2014generalized}, and the definition of $\bm{S}$ in (\ref{eq:Smat}).  This property (\ref{eq:Q1zero}) will be used in the proof of entropy conservation.  

It can be helpful to split up $\bm{Q}_h$ into two matrices
\begin{gather*}
\bm{Q}_h = \bm{S}_h + \frac{1}{2}\bm{B}_h, \qquad
\bm{S}_h \myeq \LRs{\begin{array}{cc|cc}
\bm{S} &  \frac{1}{2}\bm{V}_f^T\bm{B} & & \\
 - \frac{1}{2}\bm{B}\bm{V}_f &  & & \frac{1}{2}\bm{e}_R\bm{e}_L^T\\ [.2em]
 \hline\\[-1.0em]
&& \bm{S} &  \frac{1}{2}\bm{V}_f^T\bm{B} \\
& -\frac{1}{2}\bm{e}_L\bm{e}_R^T &  - \frac{1}{2}\bm{B}\bm{V}_f &\\
\end{array}}\\
\bm{B}_h \myeq \LRs{\begin{array}{cc|cc}
&  & & \\
 & -\bm{e}_L\bm{e}_L^T & & \\
 \hline
&&  &  \\
&  && \bm{e}_R\bm{e}_R^T\\
\end{array}}.
\end{gather*}
The matrix $\bm{S}_h$ is skew-symmetric, while the matrix $\bm{B}_h$ functions as a boundary operator which extracts boundary values over the two-element domain (i.e.\ $\bm{B}_h$ extracts a left boundary value from element 1 and a right boundary value from element 2).  Note that here, $\bm{B}_h$ is diagonal, unlike the boundary operator defined in Section~\ref{sec:gsbpsat}.  It should also be noted that the two-element decoupled SBP operator $\bm{Q}_h$ in (\ref{eq:decoupledsbp2elem}) is related to the GSBP operator in (\ref{eq:gsbp}) through a block interpolation matrix
\[
\bm{I}_h \myeq \begin{bmatrix}
\bm{I} &\\
\bm{V}_f &\\
&\bm{I}\\
&\bm{V}_f\\
\end{bmatrix}, \qquad \bm{I}_h^T\bm{Q}_h\bm{I}_h = {\begin{bmatrix}
\bm{S}  & \frac{1}{2}\bm{t}_R\bm{t}_L^T \\
- \frac{1}{2}\bm{t}_L\bm{t}_R^T & \bm{S}
\end{bmatrix}}
+
{\begin{bmatrix}
-\frac{1}{2}\bm{t}_L\bm{t}_L^T  & \\
 & \frac{1}{2} \bm{t}_R\bm{t}_R^T
\end{bmatrix}}.
\]

We will now utilize the two-element operators described in this section to construct an entropy stable Gauss collocation scheme on two elements.  This scheme will differ from that of Section~\ref{sec:gsbpsat} in that neighboring elements will only be coupled together through face values.  
Let $\bm{u}^1_N, \bm{u}^2_N$ denote the values of the conservative variables at Gauss points on elements 1 and 2, respectively, and let $\bm{u}_h$ denote their concatenation as defined in (\ref{eq:ESgsbp}).  Let $\bm{v}(\bm{u}^i_N)$ denote the evaluation of entropy variables at Gauss points on element $i$, and define the ``entropy-projected conservative variables'' $\tilde{\bm{u}}^1_f, \tilde{\bm{u}}^2_f$ by evaluating the conservative variables in terms of the  interpolated values of the entropy variables at element boundaries 
\[
\tilde{\bm{u}}^i_f \myeq \bm{u}(\bm{v}^i_f), \qquad \bm{v}^i_f \myeq \bm{V}_f\bm{v}(\bm{u}^i_N), \qquad i = 1,2. 
\]
We now introduce $\bm{u}_h = \begin{bmatrix}\bm{u}^1_N & \bm{u}^2_N\end{bmatrix}^T$ as the concatenated vector of solution values at Gauss points, and define $\bm{v}_h, \tilde{\bm{v}}$, and $\tilde{\bm{u}}$ as follows:  
\[
\bm{v}_h \myeq \bm{v}\LRp{\bm{u}_h} = \bm{v}\LRp{\begin{bmatrix}
{\bm{u}^1_N}\\
{\bm{u}^2_N}
\end{bmatrix}}, \qquad \tilde{\bm{v}} \myeq \bm{I}_h\bm{v}_h, \qquad \tilde{\bm{u}} \myeq \bm{u}\LRp{\tilde{\bm{v}}} = 
%\bm{u}\LRp{
%\begin{bmatrix}
%\bm{I}\\
%\bm{V}_f 
%\end{bmatrix}\bm{v}
%}=
\begin{bmatrix}
\bm{u}^1_N \\
\tilde{\bm{u}}^1_f \\
\bm{u}^2_N \\
\tilde{\bm{u}}^2_f
\end{bmatrix}.  
\]
The term $\bm{v}_h$ corresponds to the vector of entropy variables at $\bm{u}_h$, while $\tilde{\bm{v}}$ corresponds to the concatenated vector of Gauss point values and interpolated boundary values $\bm{v}^i_f$ of the entropy variables.  The term $\tilde{\bm{u}}$ denotes the evaluation of the conservative variables in terms of $\tilde{\bm{v}}$.  

Finally, we define $\bm{F}_S$ as the matrix of evaluations of the two-point flux $\bm{f}_S$ at combinations of values of $\tilde{\bm{u}}$
\[
\LRp{\bm{F}_S}_{ij} \myeq \bm{f}_S\LRp{\tilde{\bm{u}}_i,\tilde{\bm{u}}_j}, \quad 1\leq i,j \leq 2(N+3).
\]
Note that, due to the consistency of $\bm{f}_S$, the diagonal of $\bm{F}_S$ reduces to flux evaluations
\begin{equation}
\label{eq:diagFS}
\LRp{\bm{F}_S}_{ii} = \bm{f}_S\LRp{\tilde{\bm{u}}_i,\tilde{\bm{u}}_i} = \bm{f}\LRp{\tilde{\bm{u}}_i}.  
\end{equation}
We can now construct a semi-discretely entropy conservative formulation based on decoupled SBP operators:
\begin{theorem}
\label{thm:consentropy}
Let $\bm{Q}_h$ be defined by (\ref{eq:decoupledsbp2elem}), and let $\bm{u}_h$ denote the two-element solution of the following formulation:
\begin{gather}
\label{eq:form}
\bm{W}_h \td{}{t} \bm{u}_h + \note{2} \bm{I}_h^T\LRp{\bm{Q}_h \circ \bm{F}_S}\bm{1} = 0.
\end{gather}
Then, $\bm{u}_h$ satisfies a semi-discrete conservation of entropy
\begin{gather*}
\bm{1}^T\bm{W}_h\td{S(\bm{u}_h)}{t} + \bm{1}^T\bm{B}\LRp{\bm{v}_f^T\bm{f}(\tilde{\bm{u}}_f) - \psi\LRp{\tilde{\bm{u}}_f}} = 0\\
\bm{v}_f \myeq \begin{bmatrix}
\bm{t}_L^T \bm{v}\LRp{\bm{u}_N^1}\\
\bm{t}_R^T \bm{v}\LRp{\bm{u}_N^2}
\end{bmatrix}, \qquad \tilde{\bm{u}}_f \myeq \bm{u}(\bm{v}_f).
\end{gather*}
%Let $\circ$ denote the Hadamard product, and let $\tilde{\bm{u}}_f^+$ denote values of the entropy-projected conservative variables on neighboring elements.  Then, the formulation
%\begin{equation}
%\label{eq:form}
%\bm{W}\td{\bm{u}_N}{t} + 2 \begin{bmatrix}
%\bm{I}\\
%\bm{V}_f
%\end{bmatrix}^T\LRp{\bm{Q}_N \circ \bm{F}_S}\bm{1} + \bm{V}_f^T \bm{B} \LRp{\bm{f}_S\LRp{\tilde{\bm{u}}_f^+,\tilde{\bm{u}}_f} - \bm{f}(\bm{u})} = 0
%\end{equation}
%satisfies the quadrature form of the semi-discrete conservation of entropy
%\[
%\bm{1}^T\bm{W}\td{S(\bm{u}_N)}{t} + \bm{1}^T\bm{B} \LRp{\bm{v}_f^T\bm{f}(\tilde{\bm{u}}_f) - \psi_i\LRp{\tilde{\bm{u}}_f}} = 0.
%\]
\end{theorem}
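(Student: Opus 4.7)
The plan is to follow the standard entropy stability argument: contract the formulation (\ref{eq:form}) with the vector of entropy variables, invoke the chain rule on the time derivative, and exploit skew-symmetry plus Tadmor's shuffle condition on the flux-differencing term. The novelty is handling the interpolation matrix $\bm{I}_h$ and the decoupled structure, which is done through the entropy-projected conservative variables $\tilde{\bm{u}}$.

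First I would left-multiply (\ref{eq:form}) by $\bm{v}_h^T = \bm{v}(\bm{u}_h)^T$. For the time-derivative term, the definition $\bm{v} = \partial S/\partial \bm{u}$ and the diagonality of $\bm{W}_h$ give $\bm{v}_h^T \bm{W}_h \frac{d\bm{u}_h}{dt} = \bm{1}^T \bm{W}_h \frac{dS(\bm{u}_h)}{dt}$. For the flux-differencing term, I would use the key identity $\bm{v}_h^T \bm{I}_h^T = (\bm{I}_h \bm{v}_h)^T = \tilde{\bm{v}}^T$, so that
\[
2\bm{v}_h^T \bm{I}_h^T (\bm{Q}_h \circ \bm{F}_S) \bm{1} = 2\sum_{i,j} (\bm{Q}_h)_{ij}\, \tilde{\bm{v}}_i^T \bm{f}_S(\tilde{\bm{u}}_i, \tilde{\bm{u}}_j).
\]
Crucially, because $\tilde{\bm{u}} = \bm{u}(\tilde{\bm{v}})$, the pair $(\tilde{\bm{v}}_i, \tilde{\bm{u}}_i)$ is a consistent entropy/conservative pair at every index, so Tadmor's shuffle condition can be applied componentwise even at the boundary nodes.

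Next I would split $\bm{Q}_h = \bm{S}_h + \tfrac{1}{2}\bm{B}_h$. For the skew-symmetric piece, swapping the summation indices and using $(\bm{S}_h)_{ij} = -(\bm{S}_h)_{ji}$ together with the symmetry $\bm{f}_S(\tilde{\bm{u}}_i,\tilde{\bm{u}}_j) = \bm{f}_S(\tilde{\bm{u}}_j,\tilde{\bm{u}}_i)$ gives
\[
2\sum_{i,j}(\bm{S}_h)_{ij}\, \tilde{\bm{v}}_i^T \bm{f}_S(\tilde{\bm{u}}_i,\tilde{\bm{u}}_j) = \sum_{i,j}(\bm{S}_h)_{ij}\,(\tilde{\bm{v}}_i - \tilde{\bm{v}}_j)^T \bm{f}_S(\tilde{\bm{u}}_i,\tilde{\bm{u}}_j) = \sum_{i,j}(\bm{S}_h)_{ij}(\tilde{\psi}_i - \tilde{\psi}_j),
\]
where I have used the shuffle condition with $\tilde{\psi}_i \myeq \psi(\tilde{\bm{u}}_i)$. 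Expanding the telescoping sum and using skew-symmetry once more reduces this to $2\tilde{\bm{\psi}}^T \bm{S}_h\bm{1}$, and then the identity (\ref{eq:Q1zero}) gives $\bm{S}_h\bm{1} = -\tfrac{1}{2}\bm{B}_h\bm{1}$, so the skew-symmetric contribution equals $-\tilde{\bm{\psi}}^T \bm{B}_h\bm{1}$.

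For the $\bm{B}_h$ piece, because $\bm{B}_h$ is diagonal and the diagonal of $\bm{F}_S$ collapses to $\bm{f}(\tilde{\bm{u}}_i)$ by (\ref{eq:diagFS}), I get $\tilde{\bm{v}}^T \bm{B}_h\, \bm{f}(\tilde{\bm{u}})$. The structure of $\bm{B}_h$ picks out precisely the left endpoint of element 1 (with weight $-1$) and the right endpoint of element 2 (with weight $+1$), which by the definitions of $\bm{v}_f$ and $\tilde{\bm{u}}_f$ reduces to $\bm{v}_f^T \bm{B}\, \bm{f}(\tilde{\bm{u}}_f)$; analogously $\tilde{\bm{\psi}}^T \bm{B}_h\bm{1} = \bm{1}^T \bm{B}\, \psi(\tilde{\bm{u}}_f)$. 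Combining everything yields the claimed entropy balance.

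The main obstacle I anticipate is keeping the bookkeeping straight between volume and surface indices inside the block-structured $\bm{Q}_h$, and in particular verifying that $\bm{S}_h$ is genuinely skew (so the shuffle trick applies to the entire sum, including the inter-element coupling blocks $\pm\tfrac{1}{2}\bm{e}_R \bm{e}_L^T$ and their images under $\pm\tfrac{1}{2}\bm{V}_f^T\bm{B}$). Everything else is a standard consequence of Tadmor's shuffle condition, $\bm{Q}_h\bm{1}=0$, and the consistency of $\bm{f}_S$.
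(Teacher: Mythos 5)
Your proposal is correct and follows essentially the same route as the paper's proof: testing with $\bm{v}_h^T$, using $\bm{I}_h\bm{v}_h = \tilde{\bm{v}}$ so that the entropy-projected pairs $(\tilde{\bm{v}}_i,\tilde{\bm{u}}_i)$ admit the Tadmor shuffle condition, splitting $\bm{Q}_h = \bm{S}_h + \tfrac{1}{2}\bm{B}_h$, telescoping the skew-symmetric part to $2\psi(\tilde{\bm{u}})^T\bm{S}_h\bm{1} = -\bm{1}^T\bm{B}_h\psi(\tilde{\bm{u}})$ via $\bm{Q}_h\bm{1}=0$, and collapsing the diagonal $\bm{B}_h$ term through the consistency of $\bm{f}_S$. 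You also correctly isolate the one point that makes the decoupled construction work, namely that the face states must be the entropy-projected conservative variables for the shuffle condition to apply at the boundary indices.
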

\begin{proof}
The proof results from testing with $\bm{v}_h^T$.  Since $\bm{v}(\bm{u}) = \pd{S(\bm{u})}{\bm{u}}$ and $\bm{W}_h$ is diagonal, the time term yields
\[
\bm{v}_h^T\td{}{t}\bm{W}_h\bm{u}_h = \bm{1}^T \td{}{t}\bm{W}_h\diag{\bm{v}_h}\bm{u}_h = \bm{1}^T\td{}{t}\bm{W}_h S(\bm{u}_h).
\]
The spatial term can be manipulated as follows
\begin{align*}
2\bm{v}_h^T \bm{I}_h^T\LRp{\bm{Q}_h \circ \bm{F}_S}\bm{1} &= 2\LRp{\bm{I}_h\bm{v}_h}^T \LRp{\LRp{\bm{S}_h \circ \bm{F}_S}\bm{1} + \frac{1}{2}\LRp{\bm{B}_h \circ \bm{F}_S}\bm{1}}\\
&= \tilde{\bm{v}}^T\LRp{\bm{B}_h \circ \bm{F}_S}\bm{1} + \tilde{\bm{v}}^T \LRp{\bm{S}_h \circ \bm{F}_S}\bm{1} - \bm{1}^T \LRp{\bm{S}_h \circ \bm{F}_S}\tilde{\bm{v}},
\end{align*}
where we have used the skew-symmetry of $\bm{S}_h$ in the last step.  The boundary term reduces to 
\[
\tilde{\bm{v}}^T\LRp{\bm{B}_h \circ \bm{F}_S}\bm{1} = \bm{1}^T\bm{B}_h\tilde{\bm{v}}^T\bm{f}\LRp{\tilde{\bm{u}}}= \bm{1}^T\bm{B}\tilde{\bm{v}}_f^T\bm{f}\LRp{\tilde{\bm{u}_f}},
\]
where we have used (\ref{eq:diagFS}) and the fact that $\bm{B}_h$ is diagonal.  Here, $\tilde{\bm{v}}^T\bm{f}\LRp{\tilde{\bm{u}}}$, $\tilde{\bm{v}}_f^T\bm{f}\LRp{\tilde{\bm{u}}_f}$ denote vectors whose entries are the vector inner products of components of $\tilde{\bm{v}}$ and $\bm{f}\LRp{\tilde{\bm{u}}}$ at volume and face points, respectively.  

The volume terms can be manipulated using the definition of $\bm{F}_S$ and the Tadmor shuffle condition in Definition~\ref{def:tadmor}
\begin{align*}
\tilde{\bm{v}}^T \LRp{\bm{S}_h \circ \bm{F}_S} \bm{1} - \bm{1}^T \LRp{\bm{S}_h \circ \bm{F}_S}\tilde{\bm{v}} 
&= \sum_{ij} \LRp{\bm{S}_h}_{ij} \LRp{\tilde{\bm{v}}_i - \tilde{\bm{v}}_j}^T\bm{f}_S\LRp{\tilde{\bm{u}}_i,\tilde{\bm{u}}_j}\\
&= \sum_{ij} \LRp{\bm{S}_h}_{ij} \LRp{\psi\LRp{\tilde{\bm{u}}_i} - \psi\LRp{\tilde{\bm{u}}_j}}\\
&= \psi\LRp{\tilde{\bm{u}}}^T \bm{S}_h \bm{1}- \bm{1}^T \bm{S}_h \psi\LRp{\tilde{\bm{u}}} = 2\LRp{ \psi\LRp{\tilde{\bm{u}}}^T\bm{S}_h\bm{1}},
\end{align*}
where we have again used the skew-symmetry of $\bm{S}_h$ in the last step.  Substituting $\bm{S}_h = \bm{Q}_h - \frac{1}{2}\bm{B}_h$ and using (\ref{eq:Q1zero}) yields
\begin{align*} 
2\LRp{ \psi\LRp{\tilde{\bm{u}}}^T\bm{S}_h\bm{1}} &= \psi\LRp{\tilde{\bm{u}}}^T\LRp{2\bm{Q}_h - \bm{B}_h} \bm{1} \\
&=-\psi\LRp{\tilde{\bm{u}}}^T\bm{B}_h \bm{1}  = -\bm{1}^T\bm{B}_h \psi\LRp{\tilde{\bm{u}}} =  -\bm{1}^T\bm{B} \psi\LRp{\tilde{\bm{u}}_f}, 
\end{align*} 
where we have used the symmetry of $\bm{B}_h$ in the second to last step.
\end{proof}

\begin{remark}
The proof of Theorem~\ref{thm:consentropy} follows directly from choosing either GLL or Gauss quadratures in Theorem 4 of \cite{chan2017discretely}.  The proof is reproduced here for clarity, as the two-element case illuminates the skew-symmetric nature and structure of the inter-element coupling more explicitly.  
\end{remark}

\note{Extending the two-element case to multiple elements can be done by defining analogous ``global'' differentiation matrices.  Suppose that there are now three elements.  Then, the matrices $\bm{S}_h, \bm{B}_h$ can be defined as}
\begin{gather*}
\note{\bm{S}_h \myeq \LRs{\begin{array}{cc|cc|cc}
\bm{S} &  \frac{1}{2}\bm{V}_f^T\bm{B} & &&& \\
 - \frac{1}{2}\bm{B}\bm{V}_f &  & & \frac{1}{2}\bm{e}_R\bm{e}_L^T &&\\ [.2em]
 \hline\\[-1.0em]
&& \bm{S} &  \frac{1}{2}\bm{V}_f^T\bm{B}&& \\
& -\frac{1}{2}\bm{e}_L\bm{e}_R^T &  - \frac{1}{2}\bm{B}\bm{V}_f &&&\frac{1}{2}\bm{e}_R\bm{e}_L^T\\
 \hline\\[-1.0em]
&&&& \bm{S} &  \frac{1}{2}\bm{V}_f^T\bm{B} \\
&&& -\frac{1}{2}\bm{e}_L\bm{e}_R^T &  - \frac{1}{2}\bm{B}\bm{V}_f &\\
\end{array}}}
\\
\note{\bm{B}_h \myeq \LRs{\begin{array}{cc|c|cc}
&  & && \\
 & -\bm{e}_L\bm{e}_L^T & && \\
 \hline
&&  \bm{0} & & \\
 \hline
&&&&    \\
&&&  & \bm{e}_R\bm{e}_R^T\\
\end{array}}, \qquad \bm{I}_h = \begin{bmatrix}
\bm{I} &&\\
\bm{V}_f &&\\
&\bm{I}&\\
&\bm{V}_f&\\
&&\bm{I}\\
&&\bm{V}_f\\
\end{bmatrix}.}
\end{gather*}
\note{These global operators again satisfy $\bm{Q}_h\bm{1} = 0$ and the SBP property $\bm{Q}_h + \bm{Q}_h^T = \bm{B}_h$, where $\bm{Q}_h = \bm{S}_h + \frac{1}{2}\bm{B}_h$.  Moreover, $\bm{Q}_h$ can be used to produce high order accurate approximations of derivatives, and can be used to construct entropy conservative schemes as in Theorem~\ref{thm:consentropy}.  The extension to a general number of elements is done in a similar fashion. }

It is possible to convert the semi-discrete entropy equality in Theorem~\ref{thm:consentropy} to a semi-discrete entropy inequality by adding appropriate interface dissipation terms, such as Lax-Friedrichs or matrix dissipation \cite{winters2017uniquely}.  We note that these terms must be computed in terms of $\tilde{\bm{u}}_f$ in order to ensure a discrete dissipation of entropy \cite{chen2017entropy, chan2017discretely}.  \note{Boundary conditions can also be incorporated into the formulation (\ref{eq:form}) in a weak fashion.  Let $\bm{u}_L, \bm{u}_R$ denote the values of the solution at the left and right domain boundaries of a 1D domain.  Using the SBP property, we can modify (\ref{eq:form}) to }
\[
\note{\bm{W}_h\td{\bm{u}_h}{t} + 2\bm{I}_h^T \LRp{\bm{S}_h\circ \bm{F}_S}\bm{1} + \bm{I}_h^T\bm{B}_h \begin{bmatrix}\bm{0}\\ \bm{f}(\bm{u}_L) \\ \vdots \\ \bm{0}\\ \bm{f}(\bm{u}_R)\end{bmatrix}}= 0,
\]
\note{where we have used the fact that $\bm{B}_h$ is diagonal and the consistency of $\bm{f}_S$ (implying that the diagonal of $\bm{F}_S$ reduces to the evaluation of the flux $\bm{f}(\bm{u})$) to simplify the boundary term $\LRp{\bm{B}_h\circ\bm{F}_S}\bm{1}$.  Boundary conditions are incorporated by replacing $\bm{f}(\bm{u}_L), \bm{f}(\bm{u}_R)$ with numerical fluxes $\bm{f}^*_L, \bm{f}^*_R$ as follows:}
\begin{equation}
\note{\bm{W}_h\td{\bm{u}_h}{t} + 2\bm{I}_h^T \LRp{\bm{S}_h\circ \bm{F}_S}\bm{1} + \bm{I}_h^T \bm{B}_h \begin{bmatrix}\bm{0}\\ \bm{f}^*_L \\ \vdots \\ \bm{0}\\ \bm{f}^*_R\end{bmatrix}}= 0.
\label{eq:bform}
\end{equation}
\note{If the boundary numerical flux satisfies the entropy stability conditions}
\[
\note{\psi_L - \bm{v}_L^T\bm{f}^*_{L} \leq 0, \qquad {\psi}_R - \bm{v}_R^T\bm{f}^*_{R} \leq 0}
\]
\note{then the resulting scheme satisfies a global entropy inequality \cite{chen2017entropy}.  }

\note{Assuming that the ODE system (\ref{eq:form}) exactly integrated in time and that entropy dissipative numerical fluxes and boundary conditions are used, the solution will satisfy a discrete entropy inequality (\ref{eq:weakentropyineq}).  In practice, the system (\ref{eq:form}) is solved using an ODE time-stepper.  For an explicit time-stepper, then all that is necessary is to invert the diagonal matrix $\bm{W}_h$ and evaluate the spatial terms in (\ref{eq:bform}).  

We note that the conservation or dissipation of entropy is guaranteed up to time-stepper accuracy.  In practice, we observe that entropy is dissipated for all problems considered, despite the fact that the proof does not hold at the fully discrete level.  A fully discrete entropy inequality can be guaranteed using implicit or space-time discretizations \cite{tadmor2003entropy, friedrich2018entropy}.}

\section{Extension to higher dimensions and non-affine meshes}
\label{sec:2}

The formulation in Theorem~\ref{thm:consentropy} can be naturally extended to Cartesian meshes in higher dimensions through a tensor product construction.  We first consider the construction of higher dimensional differentiation matrices on a two-dimensional reference element $\hat{\Omega}$, assuming a two dimensional tensor product grid of quadrature nodes (the construction of decoupled SBP operators in three dimensions is straightforward and similar to the two-dimensional case).  We then construct physical differentiation matrices on mapped elements $\Omega^k$, through which we construct an entropy conservative scheme.  

Let $\bm{D}_{\rm 1D}, \bm{W}_{\rm 1D}$ denote the 1D differentiation and mass matrices, respectively, on the reference interval $[-1,1]$.  Let ${\bm{W}}$ denote the 2D reference mass matrix, and let ${\bm{D}}^i$ denote the differentiation matrices with respect to the $i$th reference coordinate.  These matrices can be expressed in terms of Kronecker products  
\[
{\bm{D}}^1 \myeq \bm{D}_{\rm 1D} \otimes \bm{I}_{N+1}, \qquad {\bm{D}}^2  \myeq \bm{I}_{N+1} \otimes \bm{D}_{\rm 1D}, \qquad {\bm{W}} \myeq \bm{W}_{\rm 1D} \otimes  \bm{W}_{\rm 1D}, 
\]
where $\bm{I}_{N+1}$ denotes the $(N+1)\times (N+1)$ identity matrix.  %The extension to 3D is done similarly.  

We also construct higher dimensional face interpolation matrices.  Let $\bm{V}_{f, {\rm 1D}}$ denote the one-dimensional interpolation matrix, and let $\bm{B}_{\rm 1D}$ denote the boundary matrix defined in Lemma~\ref{lemma:sbp}.  For an appropriate ordering of face quadrature points, the two-dimensional face interpolation matrix $\bm{V}_f$ and reference boundary matrices ${\bm{B}}^1, {\bm{B}}^2$ can be expressed as the concatenation of Kronecker product matrices
\[
\bm{V}_f \myeq \begin{bmatrix}
\bm{V}_{f, {\rm 1D}} \otimes \bm{I}_{2}\\
\bm{I}_{2} \otimes \bm{V}_{f, {\rm 1D}} 
\end{bmatrix}, \qquad 
{\bm{B}}^1 \myeq \begin{bmatrix}
\bm{B}_{\rm 1D} \otimes \bm{I}_{2} & \\
& \bm{0}
\end{bmatrix}, \qquad 
{\bm{B}}^2 \myeq \begin{bmatrix}
\bm{0} &\\
& \bm{I}_{2} \otimes \bm{B}_{\rm 1D} 
\end{bmatrix}.
\]
In three dimensions, the face interpolation matrix would be expressed as the concatenation of three Kronecker products involving $\bm{V}_{f, {\rm 1D}}$.  The higher dimensional differentiation and interpolation matrices $\bm{D}^i, \bm{V}_f$ can now be used to construct higher dimensional decoupled SBP operators.  Let ${\bm{Q}}^i_N$ denote the decoupled SBP operator for the $i$th coordinate on the reference element, where ${\bm{Q}}^i_N$ is defined as
\[
{\bm{Q}}^i_N \myeq \begin{bmatrix}
{\bm{Q}}^i - \frac{1}{2}\bm{V}_f^T{\bm{B}}^i\bm{V}_f & \frac{1}{2}\bm{V}_f^T{\bm{B}}^i\\
-\frac{1}{2}{\bm{B}}^i\bm{V}_f & \frac{1}{2}{\bm{B}}^i
\end{bmatrix}, \qquad {\bm{Q}}^i \myeq {\bm{W}}{\bm{D}}^i.
\]

Let the domain now be decomposed into non-overlapping elements $\Omega^k$, such that $\Omega^k$ is the image of $\hat{\Omega}$ under a degree $N$ polynomial mapping $\bm{\Phi}^k$.  We define geometric terms ${G}^k_{ij} = J^k\pd{\hat{x}_j}{\bm{x}_i}$ as scaled derivatives of reference coordinates $\hat{\bm{x}}$ with respect to physical coordinates $\bm{x}$.  We also introduce the scaled normals $n_iJ^k_f$, which can be computed on quadrilateral and tensor product elements via
\[
n_i J^k_f =\pm \sum_{j=1}^d G^k_{ij} = \pm \sum_{j=1}^d  J^k\pd{\hat{x}_j}{\bm{x}_i},
\]
where the sign of $n_i J^k_f$ is negative for a ``left'' face and positive for a ``right'' face.  
These geometric terms introduce scalings $J^k, J^k_f$, where $J^k$ is the determinant of the Jacobian of $\bm{\Phi}^k$ and $J^k_f$ denotes the determinant of the Jacobian of the mapping from a physical face to a reference face.  The quantities $G^k_{ij}, n_iJ^k_f$ can now be used to define matrices over each physical element $\Omega^k$ 
\begin{gather}
\bm{W}_k \myeq \bm{W} \diag{\bm{J}^k}, \qquad \bm{B}^i_k \myeq \bm{B}^i \diag{\bm{n}^k_i }, \nonumber\\
\bm{Q}^i_k \myeq \frac{1}{2} \sum_{j=1}^d \LRp{\diag{{\bm{G}}^k_{ij}}\bm{Q}^i_N  + \bm{Q}^i_N \diag{{\bm{G}}^k_{ij}} } \label{eq:splitcurv},
\end{gather}
where we have discretized the curved differentiation matrix in split form \cite{nordstrom2006conservative, kopriva2016geometry}.  
Here, $\bm{J}^k$ denotes the vector of values of $J^k$ at volume quadrature points, ${\bm{G}}^k_{ij}$ denotes the vector of values of ${G}^k_{ij}$ at volume and face quadrature points, and $\bm{n}^k_i$ denotes the $i$th scaled outward normal $n_i J^k_f$ at face quadrature points.  

A 2D Gauss collocation scheme can now be given in terms of $\bm{W}_k, \bm{Q}^i_k, \bm{B}^i_k$.   Let $\bm{u}^k_N$ denote the vector of solution values on $\Omega^k$, and define the quantities 
\begin{gather*}
%{\bm{W}_k\td{\bm{u}^k_N}{t} + \sum_{i=1}^d \LRp{2\LRp{\bm{Q}^i_k\circ\bm{F}^i_S}\bm{1} + \bm{V}_f^T\bm{B}_k^i\LRp{\bm{f}^i_S\LRp{{\tilde{\bm{u}}_f}^+,\tilde{\bm{u}}^k_f}-\bm{f}^i\LRp{\tilde{\bm{u}}_f}}}} = 0\\
\LRp{\bm{F}^i_S}_{mn} \myeq \bm{f}^i_S\LRp{\tilde{\bm{u}}_m,\tilde{\bm{u}}_n}, \qquad 1 \leq m,n \leq (N+1)^2 + 4(N+1) \\
\tilde{\bm{u}} \myeq \begin{bmatrix}
\bm{u}^k_N\\
\tilde{\bm{u}}^k_f
\end{bmatrix}, \qquad \tilde{\bm{u}}^k_f \myeq \bm{u}\LRp{\bm{v}^k_f}, \qquad 
\bm{v}^k_f \myeq {\bm{V}_f\bm{v}\LRp{\bm{u}^k_N}},
\end{gather*}
where $(N+1)^2 + 4(N+1)$ is the total number of volume and face points.  
We then have the following theorem on the semi-discrete conservation of entropy:
\begin{theorem}
\label{thm:esdg2d}
Assume that $\bm{Q}^k\bm{1} = 0$, and that $\Omega^k$ are mapped curvilinear elements.  Let $\bm{u}^k_N$ satisfy the local formulation on $\Omega^k$
\begin{equation}
{\bm{W}_k\td{\bm{u}^k_N}{t} + \sum_{i=1}^d \LRp{2\LRp{\bm{Q}^i_k\circ\bm{F}^i_S}\bm{1} + \bm{V}_f^T\bm{B}_k^i\LRp{\bm{f}^i_S\LRp{{\tilde{\bm{u}}_f}^+,\tilde{\bm{u}}^k_f}-\bm{f}^i\LRp{\tilde{\bm{u}}_f}}}} = 0, 
\end{equation}
where $\tilde{\bm{u}}^+$ denotes boundary values of the entropy-projected conservative variables on neighboring elements.  Then, $\bm{u}^k_N$ satisfies the discrete conservation of entropy 
\[
\sum_{k} \LRp{\bm{1}^T\bm{W}_k\td{S\LRp{\bm{u}^k_N}}{t} + \sum_{i=1}^d\bm{1}^T\bm{B}^i_k\LRp{\LRp{\bm{v}^k_f}^T\bm{f}^i(\tilde{\bm{u}}^k_f) - \psi\LRp{\tilde{\bm{u}}^k_f}}} = 0.
\]
\end{theorem}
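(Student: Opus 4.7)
The plan is to extend the one-dimensional argument of Theorem~\ref{thm:consentropy} to the multidimensional curvilinear setting by testing the local formulation with entropy variables on each element, invoking the Tadmor shuffle condition, and then summing over $k$ so that interior face contributions cancel through flux symmetry.

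First, I would multiply the local formulation by $\bm{v}\LRp{\bm{u}^k_N}^T$. Since $\bm{W}_k$ is diagonal and $\bm{v} = \pd{S}{\bm{u}}$, the time term immediately reduces to $\bm{1}^T\bm{W}_k\td{S(\bm{u}^k_N)}{t}$. For the volume flux-differencing term $2\LRp{\bm{Q}^i_k\circ\bm{F}^i_S}\bm{1}$, I would split $\bm{Q}^i_k = \bm{S}^i_k + \frac{1}{2}\bm{E}^i_k$ into a skew-symmetric part $\bm{S}^i_k$ and a symmetric boundary part $\bm{E}^i_k$. The key algebraic fact is that $\bm{Q}^i_N + (\bm{Q}^i_N)^T$ is supported only on the face block as $\diag{\bm{0},\bm{B}^i}$, and the diagonal matrices $\diag{\bm{G}^k_{ij}}$ commute with this face-supported structure, so $\bm{E}^i_k$ reduces to a diagonal matrix involving the scaled normals $n_i J^k_f$, matching $\bm{B}^i_k$ on the face block. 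After lifting to $\tilde{\bm{v}} = \bm{I}_h\bm{v}\LRp{\bm{u}^k_N}$, the skew part $\tilde{\bm{v}}^T\LRp{\bm{S}^i_k\circ\bm{F}^i_S}\bm{1} - \bm{1}^T\LRp{\bm{S}^i_k\circ\bm{F}^i_S}\tilde{\bm{v}}$ telescopes via the Tadmor shuffle condition into $2\psi^i(\tilde{\bm{u}})^T\bm{S}^i_k\bm{1}$, and invoking the discrete metric identity $\bm{Q}^k\bm{1}=0$ (which gives $\bm{S}^i_k\bm{1} = -\frac{1}{2}\bm{E}^i_k\bm{1}$) collapses this into the boundary contribution $-\bm{1}^T\bm{B}^i_k\psi\LRp{\tilde{\bm{u}}^k_f}$.

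Next, I would process the SAT term $\bm{V}_f^T\bm{B}^i_k\LRp{\bm{f}^i_S(\tilde{\bm{u}}_f^+,\tilde{\bm{u}}^k_f) - \bm{f}^i(\tilde{\bm{u}}^k_f)}$. The role of the subtracted $\bm{f}^i(\tilde{\bm{u}}^k_f)$ is to cancel the face-consistent contribution $\bm{1}^T\bm{B}^i_k \LRp{\bm{v}^k_f}^T\bm{f}^i(\tilde{\bm{u}}^k_f)$ that leaked out of the volume symmetric part $\bm{E}^i_k$; after this cancellation the only remaining face contribution is the entropy-potential term $-\bm{1}^T\bm{B}^i_k\psi\LRp{\tilde{\bm{u}}^k_f}$ plus an inter-element coupling term of the form $\LRp{\bm{v}^k_f}^T \bm{f}^i_S(\tilde{\bm{u}}_f^+,\tilde{\bm{u}}^k_f)$ weighted by the scaled normal. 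Summing the full element-wise identity over $k$, the symmetry $\bm{f}^i_S(\bm{u}_L,\bm{u}_R)=\bm{f}^i_S(\bm{u}_R,\bm{u}_L)$ together with the opposite signs of $n_iJ^k_f$ on the two sides of each shared interior face makes these coupling contributions cancel pairwise, leaving only the physical boundary flux. Reassembling the surviving terms gives the stated identity with the desired boundary entropy flux $(\bm{v}^k_f)^T\bm{f}^i(\tilde{\bm{u}}^k_f) - \psi(\tilde{\bm{u}}^k_f)$.

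The main obstacle is the bookkeeping around the split-form curvilinear operator~(\ref{eq:splitcurv}): I need to verify carefully that $\bm{Q}^i_k + (\bm{Q}^i_k)^T$ remains supported only on the face block despite the symmetric sandwich with $\diag{\bm{G}^k_{ij}}$, and that the hypothesis $\bm{Q}^k\bm{1}=0$ really plays the role of a discrete metric identity that eliminates the spurious volume contribution from the entropy-potential telescoping. Once these structural facts are in hand, the proof is essentially a per-element replay of Theorem~\ref{thm:consentropy}, with the extra ingredient being the cancellation of interior face fluxes through the symmetry of $\bm{f}^i_S$ and the matching of opposite normals across neighbouring elements.
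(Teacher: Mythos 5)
Your proposal is correct and follows essentially the same route as the paper: the paper itself gives no self-contained argument (it defers to Theorem 1 of \cite{chan2018discretely}), and that proof is exactly the per-element replay of Theorem~\ref{thm:consentropy} you describe --- test with entropy variables, split $\bm{Q}^i_k$ into a skew part and a face-supported symmetric part (which survives the split-form sandwich with $\diag{\bm{G}^k_{ij}}$ precisely because those matrices are diagonal and $\bm{Q}^i_N + (\bm{Q}^i_N)^T$ is face-supported), telescope with the Tadmor condition, and use $\bm{Q}^i_k\bm{1}=0$ as the discrete metric identity. The only small refinement: the interior-face coupling terms do not cancel pairwise from flux symmetry and opposite normals alone --- their pairwise sum is $(\bm{v}^k_f-\bm{v}^{k^+}_f)^T\bm{B}^i_k\bm{f}^i_S$, so you need one further application of the Tadmor shuffle at each interface to convert this into entropy-potential differences that cancel against the $-\bm{1}^T\bm{B}^i_k\psi^i$ terms from the two adjacent elements.
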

The proof is a special case of the proof of Theorem 1 in \cite{chan2018discretely}, with the quadrature rule taken to be a tensor product rule with $(N+1)$ Gauss (or GLL) points in each coordinate direction.  

\begin{remark}
The operator $\bm{Q}^i_k$  in (\ref{eq:splitcurv}) can be applied without needing to explicitly store geometric terms $G^k_{ij}$ at face quadrature points.  By using the structure of the boundary matrix $\bm{B}^i$, expressions involving geometric terms on faces can be replaced by expressions involving components of scaled outward normals ${n}_i J^k_f$ on $\Omega^k$.  
\end{remark}

\subsection{Discrete geometric conservation law}

We note that Theorem~\ref{thm:esdg2d} relies on the assumption that $\bm{Q}^k\bm{1} = 0$.  This condition is equivalent to ensuring that the scaled geometric terms ${\bm{G}}^k_{ij}$ satisfy a discrete geometric conservation law (GCL) \cite{carpenter2014entropy, gassner2017br1, crean2018entropy, chan2018discretely}.  For two-dimensional degree $N$ (isoparametric) mappings, the GCL is automatically satisfied.  However, in three dimensions, the GCL is not guaranteed to be satisfied at the discrete level, due to the fact that geometric terms for isoparametric mappings are polynomials of degree higher than $N$.  

It is possible to ensure the satisfaction of a discrete GCL by using a sub-parametric polynomial geometric mapping.  Let $N_{\rm geo}$ denote the degree of a polynomial geometric mapping.  In three dimensions, the exact geometric terms for a degree $N_{\rm geo}$ polynomial mapping are polynomials of degree $2N_{\rm geo}$ \cite{kopriva2006metric, hindenlang2012explicit, crean2018entropy}.  If $2N_{\rm geo} \leq N$, or if $N_{\rm geo} \leq \left\lfloor\frac{N}{2}\right\rfloor$, then the discrete GCL is automatically satisfied.\footnote{We note that this condition is specific to three-dimensional hexahedral elements, and does not necessarily hold for other element types.  For example, the discrete GCL is satisfied if $2N_{\rm geo}-2 \leq N$ for isoparametric tetrahedral elements \cite{chan2018discretely}.}

For $N_{\rm geo} \geq \left\lfloor\frac{N}{2}\right\rfloor$, modifications to the computation of geometric terms are required to ensure that the GCL is satisfied at the discrete level.  For general SBP operators, the discrete GCL can be enforced through the solution of a local least squares problem \cite{crean2018entropy}.  We take an alternative approach and  construct geometric terms using the approach of Kopriva \cite{kopriva2006metric}.  This construction takes advantage of the fact that GLL and Gauss collocation methods correspond to polynomial discretizations.  Kopriva's construction is based on rewriting the geometric terms as the reference curl of an interpolated auxiliary quantity
\begin{equation}
\begin{bmatrix}
\bm{G}^k_{1j}\\
\bm{G}^k_{2j}\\
\bm{G}^k_{3j}
\end{bmatrix} = \begin{bmatrix}
\LRp{-\hat{\Grad} \times I_N\LRp{z\hat{\Grad}y}}_j\\
\LRp{\hat{\Grad} \times I_N\LRp{z\hat{\Grad}x}}_j\\
\LRp{\hat{\Grad} \times I_N\LRp{x\hat{\Grad}y}}_j
\end{bmatrix}, \qquad j = 1,2,3.
\label{eq:conscurl}
\end{equation}
Here, $I_N$ denotes the polynomial interpolation operator using GLL nodes.  By interpolating the auxiliary quantity in (\ref{eq:conscurl}) using polynomial interpolation prior to applying the curl, the geometric terms are approximated by degree $N$ polynomials which satisfy the discrete GCL by construction.  These GCL-satisfying geometric terms can then be used to compute normal vectors.  For a watertight mesh, the constructed normal vectors are guaranteed to be continuous across faces \cite{chan2018discretely}.  %We note that this approach can also be adapted to more general geometric mappings (e.g.\ rational or superparametric).  These mappings can offer additional geometric accuracy, which can be advantageous for problems with wall boundary conditions \cite{toulorge2016optimizing, zwanenburg2017necessity}.  

To summarize, extending higher dimensional entropy stable Gauss collocation schemes to curved meshes requires the following steps: 
\begin{enumerate}
\item Construct polynomial approximations of the geometric terms using equation (\ref{eq:conscurl}) and interpolation at GLL nodes \cite{kopriva2006metric}.  
\item Evaluate approximate geometric terms at volume and surface points, and compute normal vectors in terms of the approximate geometric terms.  
\item Compute physical derivatives using the split form (\ref{eq:splitcurv}).  
\end{enumerate}
Apart from evaluating the GCL-satisfying geometric terms at separate volume and surface points, entropy stable Gauss collocation schemes are extended to curved meshes in the same manner as GLL and staggered-grid collocation schemes \cite{carpenter2014entropy, parsani2016entropy, chan2018discretely}.

\section{Numerical results}
\label{sec:3}

\note{In this section, we present numerical examples for} the compressible Euler equations, which are given in $d$ dimensions as follows:
\begin{align*}
\pd{\rho}{t} &+ \note{\sum_{j=1}^d \pd{\LRp{\rho {u}_j}}{x_j}} = 0,\\
\note{\pd{\rho {u}_i}{t}} &+ \note{\sum_{j=1}^d \pd{\LRp{\rho u_iu_j + p\delta_{ij} }}{x_j}} = 0, \qquad i = 1,\ldots,d\\
\pd{E}{t} &+ \note{\sum_{j=1}^d \pd{\LRp{{u}_j(E+p)}}{x_j}} = 0.\nonumber
\end{align*}
Here, $\rho$ is density, \note{$u_i$ denotes the $i$th component of} velocity, and $E$ is the total energy.  The pressure $p$ and specific internal energy $\rho e$ are given by 
\[
\note{p = (\gamma-1)\LRp{E - \frac{1}{2}\rho \LRp{\sum_{j=1}^d u_j^2}}}, \qquad \note{\rho e = E - \frac{1}{2}\rho \LRp{\sum_{j=1}^d u_j^2}}. %\qquad \LRb{\bm{u}}^2 = \LRp{\sum_{j=1}^d u_j^2}.
\]

There exists an infinite family of suitable convex entropies for the compressible Euler equations \cite{harten1983symmetric}.  However, there is only a single unique entropy which appropriately treats the viscous heat conduction term in the compressible Navier-Stokes equations \cite{hughes1986new}.  This entropy $S(\bm{u})$ is given by
\begin{equation*}
S(\bm{u}) = -\frac{\rho s}{\gamma-1}, 
\end{equation*}
where $s = \log\LRp{\frac{p}{\rho^\gamma}}$ is the physical specific entropy, and the dimension $d = 1,2,3$.  The entropy variables in $d$ dimensions are given by
\begin{align*}
v_1 &= \frac{\rho e (\gamma + 1 - s) - E}{\rho e}, \\
v_{1+ i}&= \frac{\rho \note{{u}_i}}{\rho e}, \qquad i = 1,\ldots, d,\\
v_{d+2} &= -\frac{\rho}{\rho e},
\end{align*}
while the conservation variables in terms of the entropy variables are given by
\begin{align*}
\rho &= -(\rho e) v_{d+2}, \\
 \rho \note{u_i} &= (\rho e) v_{1+i}, \qquad i = 1,\ldots,d\\
 E &= (\rho e)\LRp{1 - \frac{\sum_{j=1}^d{v_{1+j}^2}}{2 v_{d+2}}},
\end{align*}
where the quantities $\rho e$ and $s$ in terms of the entropy variables are 
\begin{equation*}
\rho e = \LRp{\frac{(\gamma-1)}{\LRp{-v_{d+2}}^{\gamma}}}^{1/(\gamma-1)}e^{\frac{-s}{\gamma-1}}, \qquad s = \gamma - v_1 + \frac{\sum_{j=1}^d{v_{1+j}^2}}{2v_{d+2}}.
\end{equation*}
\note{Let $f$ denote some piecewise polynomial function, and let $f^+$ denote the exterior value of $f$ across an element face.  We define the average and logarithmic averages as follows:}
\[
\note{\avg{f} = \frac{f^+ + f}{2}}, \qquad \note{\avg{f}^{\log} = \frac{f^+ - f}{\log\LRp{f^+}-\log\LRp{f}}}.
\]
\note{The average and logarithmic average are assumed to act component-wise on vector-valued functions.  We evaluate the logarithmic average using the numerically stable algorithm of \cite{ismail2009affordable}.}
Explicit expressions for entropy conservative numerical fluxes in two dimensions  are given by Chandrashekar \cite{chandrashekar2013kinetic}
\begin{align*}
&f^1_{1,S}(\bm{u}_L,\bm{u}_R) = \avg{\rho}^{\log} \avg{u_1},& &f^1_{2,S}(\bm{u}_L,\bm{u}_R) = \avg{\rho}^{\log} \avg{u_2},&\\
&f^2_{1,S}(\bm{u}_L,\bm{u}_R) = f^1_{1,S} \avg{u_1} + p_{\rm avg},&  &f^2_{2,S}(\bm{u}_L,\bm{u}_R) = f^1_{2,S} \avg{u_1},&\nonumber\\
&f^3_{1,S}(\bm{u}_L,\bm{u}_R) = f^2_{2,S},& &f^3_{2,S}(\bm{u}_L,\bm{u}_R) = f^1_{2,S} \avg{u_2} + p_{\rm avg},&\nonumber\\
&f^4_{1,S}(\bm{u}_L,\bm{u}_R) = \LRp{E_{\rm avg} + p_{\rm avg}}\avg{u_1},& &f^4_{2,S}(\bm{u}_L,\bm{u}_R) = \LRp{E_{\rm avg} + p_{\rm avg} }\avg{u_2},& \nonumber
\end{align*}
where we have introduced the auxiliary quantities 
\begin{gather}
\note{\beta = \frac{\rho}{2p}}, \qquad p_{\rm avg} = \frac{\avg{\rho}}{2\avg{\beta}}, \qquad \qquad E_{\rm avg} = \frac{\avg{\rho}^{\log}}{2\avg{\beta}^{\log}\LRp{\gamma -1}}   + \frac{\note{u_{\rm avg}^2}}{2}, \label{eq:fluxaux} \\
\note{u^2_{\rm avg}} = 2(\avg{u_1}^2 + \avg{u_1}^2) - \LRp{\avg{u_1^2} +\avg{u_2^2}} \nonumber.  
\end{gather}
Expressions for entropy conservative numerical fluxes for the three-dimensional compressible Euler equations can also be explicitly written as
\begin{gather*}
\bm{f}_{1,S} = \LRp{\begin{array}{c}
\avg{\rho}^{\log}\avg{u_1}\\
\avg{\rho}^{\log}\avg{u_1}^2 + p_{\rm avg}\\
\avg{\rho}^{\log}\avg{u_1}\avg{u_2}\\
\avg{\rho}^{\log}\avg{u_1}\avg{u_3}\\
(E_{\rm avg}+ p_{\rm avg})\avg{u_1}\\
\end{array}}, 
\qquad 
\bm{f}_{2,S} = \LRp{\begin{array}{c}
\avg{\rho}^{\log}\avg{u_2}\\
\avg{\rho}^{\log}\avg{u_1}\avg{u_2}\\
\avg{\rho}^{\log}\avg{u_2}^2 + p_{\rm avg}\\
\avg{\rho}^{\log}\avg{u_2}\avg{u_3}\\
(E_{\rm avg}+ p_{\rm avg})\avg{u_2}\\
\end{array}},\\
\bm{f}_{3,S} = \LRp{\begin{array}{c}
\avg{\rho}^{\log}\avg{u_3}\\
\avg{\rho}^{\log}\avg{u_1}\avg{u_3}\\
\avg{\rho}^{\log}\avg{u_2}\avg{u_3}\\
\avg{\rho}^{\log}\avg{u_3}^2 + p_{\rm avg}\\
(E_{\rm avg}+ p_{\rm avg})\avg{u_3}\\
\end{array}},\nonumber
%&f^1_{1,S}(\bm{u}_L,\bm{u}_R) = \avg{\rho}^{\log} \avg{u},& &f^1_{2,S}(\bm{u}_L,\bm{u}_R) = \avg{\rho}^{\log} \avg{v},&\\
%&f^2_{1,S}(\bm{u}_L,\bm{u}_R) = f^1_{1,S} \avg{u} + p_{\rm avg},&  &f^2_{2,S}(\bm{u}_L,\bm{u}_R) = f^1_{2,S} \avg{u},&\nonumber\\
%&f^3_{1,S}(\bm{u}_L,\bm{u}_R) = f^2_{2,S},& &f^3_{2,S}(\bm{u}_L,\bm{u}_R) = f^1_{2,S} \avg{v} + p_{\rm avg},&\nonumber\\
%&f^4_{1,S}(\bm{u}_L,\bm{u}_R) = \LRp{\frac{p_{\rm avg}^{\log}}{\gamma -1} + p_{\rm avg} + \frac{\nor{\bm{u}}^2_{\rm avg}}{2}}\avg{u},& &f^4_{2,S}(\bm{u}_L,\bm{u}_R) = \LRp{\frac{p_{\rm avg}^{\log}}{\gamma -1} + p_{\rm avg} + \frac{\nor{\bm{u}}^2_{\rm avg}}{2}}\avg{v},& \nonumber
\end{gather*}
%\note{where $u,v,w$ denote the $x,y,z$ velocities, respectively, and} 
where the auxiliary quantities are defined as
\begin{gather*}
\note{\beta = \frac{\rho}{2p}}, \qquad p_{\rm avg} = \frac{\avg{\rho}}{2\avg{\beta}}, \qquad E_{\rm avg} = \frac{\avg{\rho}^{\log}}{2(\gamma-1)\avg{\beta}^{\log}} + \frac{1}{2}\avg{\rho}^{\log}\note{{u}^2_{\rm avg}}, \\
\note{u^2_{\rm avg}} = 2(\avg{u_1}^2 + \avg{u_2}^2 + \avg{u_3}^2) - \LRp{\avg{u_1^2} +\avg{u_2^2} + \avg{u_3^2}}.\nonumber
\end{gather*}

In all problems, \note{we use an explicit low storage RK-45 time-stepper \cite{carpenter1994fourth}} and estimate the timestep size $dt$ using $J, J^k_f$, and degree-dependent $L^2$ trace constants $C_N$ 
\[
dt = C_{\rm CFL}\frac{ h}{ a C_N}, \qquad h = \frac{1}{{\nor{J^{-1}}_{L^{\infty}}}\nor{J^k_f}_{L^{\infty}}},
\]
where $a$ is an estimate of the maximum wave speed, $h$ estimates the mesh size, and $C_{\rm CFL}$ is some user-defined CFL constant.  For isotropic elements, the ratio of $J^k$ to $J^k_f$ scales as the mesh size $h$, while $C_N$ captures the dependence of the maximum stable timestep on the polynomial degree $N$.  For hexahedral elements, $C_N$ varies depending on the choice of quadrature.  It was shown in \cite{chan2015gpu} that 
\[
C_N =\begin{cases}
 d\frac{N(N+1)}{2} & \text{for GLL nodes}\\
d\frac{(N+1)(N+2)}{2} & \text{for Gauss nodes}
\end{cases}.
\]
Thus, based on this rough estimate of the maximum stable timestep, GLL collocation schemes should be able to take a timestep which is roughly $(1 + 2/N)$ times larger than the maximum stable timestep for Gauss collocation schemes.  We do not account for this discrepancy in this work, and instead set the timestep for both GLL and Gauss collocation schemes based on the more conservative Gauss collocation estimate of $dt$.  %Unless stated otherwise, all experiments utilize $C_{\rm CFL} = .5$.  

Numerical results in 1D are similar to those presented in \cite{chan2017discretely}.  Thus, we focus on two and three dimensional problems and comparisons of entropy stable GLL and Gauss collocation schemes.  

\subsection{2D isentropic vortex problem}

We begin by examining high order convergence of the proposed methods in two dimensions using the isentropic vortex problem \cite{shu1998essentially, crean2017high}.  The analytical solution is 
\begin{align}
\rho(\bm{x},t) &= \LRp{1 - \frac{\frac{1}{2}(\gamma-1)(\beta e^{1-r(\bm{x},t)^2})^2}{8\gamma \pi^2}}^{\frac{1}{\gamma-1}}, \qquad p = \rho^{\gamma},\\
u_1(\bm{x},t) &= 1 - \frac{\beta}{2\pi} e^{1-r(\bm{x},t)^2}(y-y_0), \qquad u_2(\bm{x},t) = \frac{\beta}{2\pi} e^{1-r(\bm{x},t)^2}(x-x_0-t),\nonumber
\end{align}
where $u_1, u_2$ are the $x$ and $y$ velocity and $r(\bm{x},t) = \sqrt{(x-x_0-t)^2 + (y-y_0)^2}$.  Here, we take $x_0 = 5, y_0 = 0$ and $\beta = 5$.  

\begin{figure}
\centering
\subfloat[Lightly warped mesh]{\includegraphics[width=.475\textwidth]{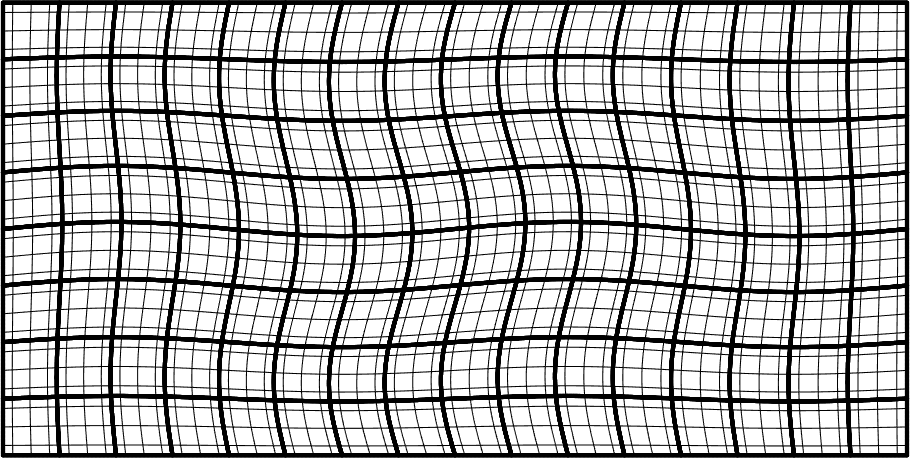}}
\hspace{1em}
\subfloat[Moderately warped mesh]{\includegraphics[width=.475\textwidth]{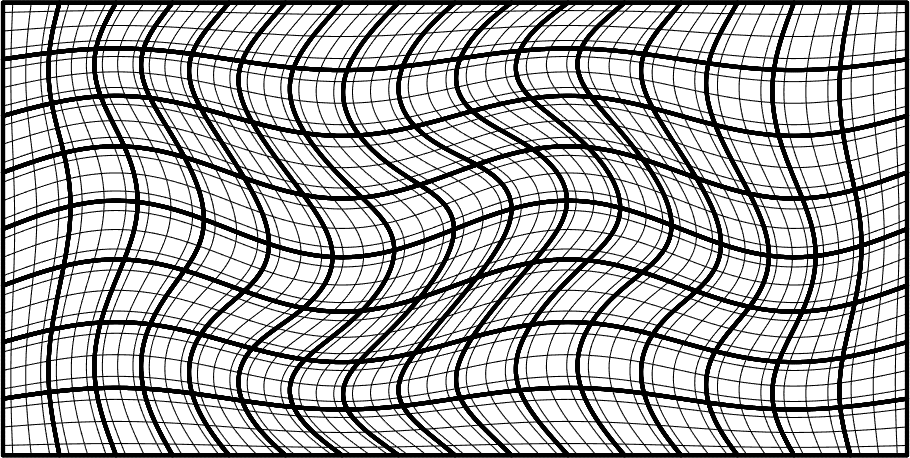}}\\
\subfloat[Heavily warped mesh]{\includegraphics[width=.45\textwidth]{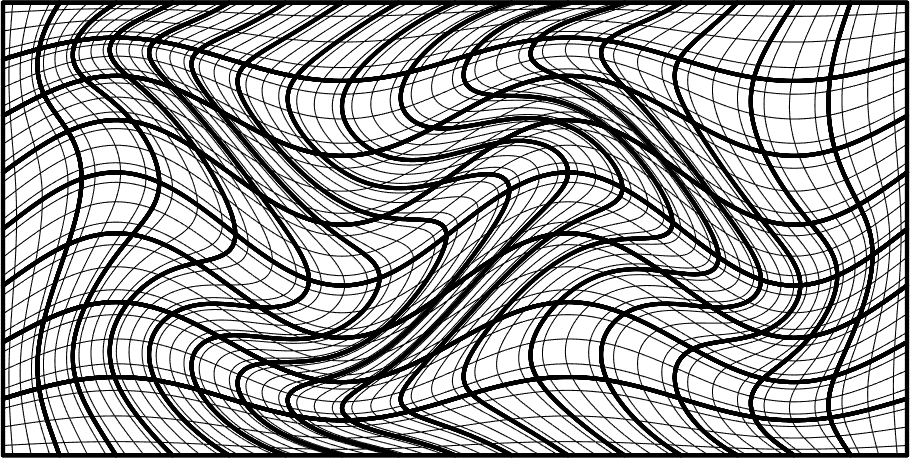}}
\caption{Lightly, moderately, and heavily warped meshes for $N=4, K= 16$.  }
\label{fig:warp2d}
\end{figure}

We solve on a periodic rectangular domain $[0, 20] \times [-5,5]$ until final time $T=5$, and compute errors over all all solution fields.  For a degree $N$ approximation, we approximate the $L^2$ error using an $(N+2)$ point Gauss quadrature rule.  We also examine the influence of element curvature for both GLL and Gauss collocation schemes by examining $L^2$ errors on a sequence of moderately and heavily warped curvilinear meshes (see Figure~\ref{fig:warp2d}).  These warpings are constructed by modifying nodal positions according to the following mapping
\note{
\begin{align*}
\tilde{x} &= x + L_x\alpha\cos\LRp{\frac{\pi}{L_x}\LRp{x-\frac{L_x}{2}}}\cos\LRp{\frac{3\pi}{L_y}y}, \\
\tilde{y} &= y + L_y\alpha\sin\LRp{\frac{4\pi}{L_x}\LRp{\tilde{x}-\frac{L_x}{2}}}\cos\LRp{\frac{\pi}{L_y}y},
\end{align*}
where $L_x = 20, L_y = 10$ denote the lengths of the domain in the $x$ and $y$ directions, respectively.  
The lightly warped mesh corresponds to $\alpha = 1/64$, the moderately warped mesh corresponds to $\alpha = 1/16$, and the heavily warped mesh corresponds to $\alpha = 1/8$.  All results are computed using $C_{\rm CFL} = 1/2$ and Lax-Friedrichs interface dissipation.  }

%\begin{center}
%\begin{tabular}{|c || c ||c ||c ||c ||c ||c |}
%\hline
%& $N=2$ & $N=3$& $N=4$& $N=5$& $N=6$& $N=7$\\
%\hline\hline
%$h = 1/4$ & $1.229e{0}$ & $4.144e{-1}$ & $1.235e{-1}$ & $3.325e{-2}$ & $1.068e{-2}$ & $3.309e{-3}$ \\
%$h = 1/8$ & $2.366e{-1}$ & $4.122e{-2}$ & $4.393e{-3}$ & $9.701e{-4}$ & $1.333e{-4}$ & $1.678e{-5}$ \\
%$h = 1/16$ & $2.589e{-2}$ & $2.930e{-3}$ & $1.269e{-4}$ & $2.103e{-5}$ & $7.970e{-7}$ & $1.327e{-7}$   
%\end{tabular}
%\end{center}
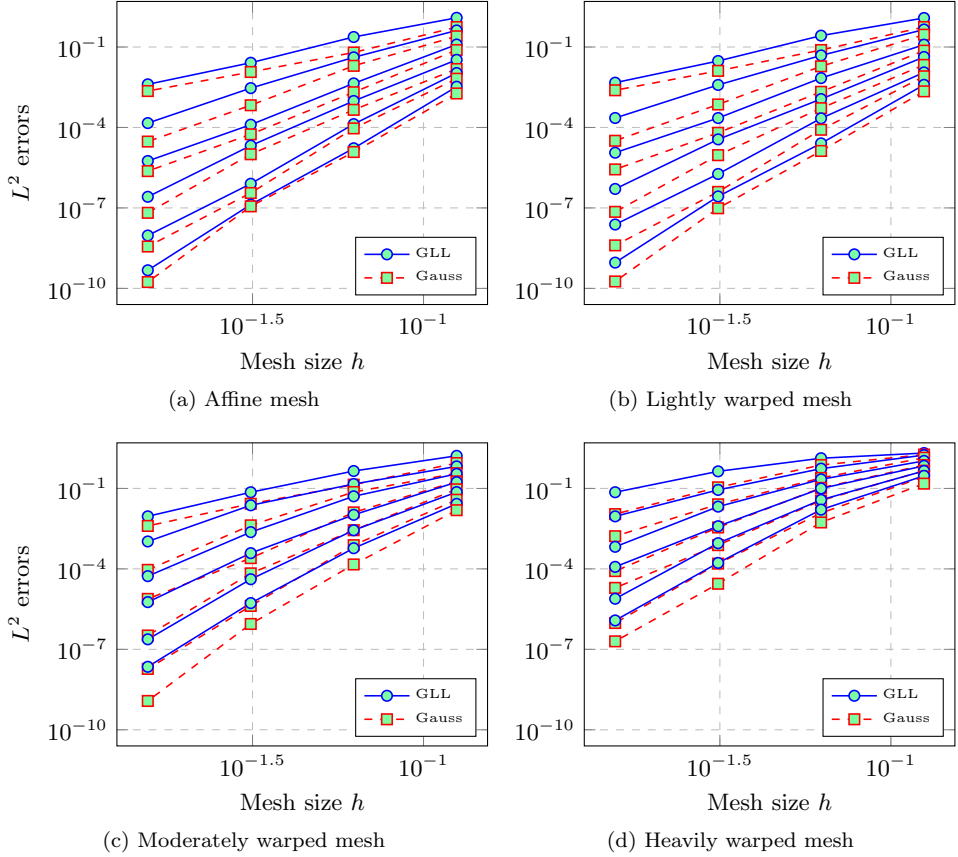
\begin{figure}
\centering
\subfloat[Affine mesh]{
\begin{tikzpicture}
\begin{loglogaxis}[
    width=.5\textwidth,
    xlabel={Mesh size $h$},
    ylabel={$L^2$ errors}, 
    ymin=2.5e-11, ymax=5,
    legend pos=south east, legend cell align=left, legend style={font=\tiny},	
    xmajorgrids=true, ymajorgrids=true, grid style=dashed,
    legend entries={GLL, Gauss}    
]
\pgfplotsset{
cycle list={{blue, mark=*}, {red, dashed ,mark=square*}}
}
\addplot+[semithick, mark options={solid, fill=markercolor}]
coordinates{(0.125,1.22861)(0.0625,0.236646)(0.03125,0.0258883)(0.015625,0.00406122)};
\addplot+[semithick, mark options={solid, fill=markercolor}]
coordinates{(0.125,0.553435)(0.0625,0.0625441)(0.03125,0.0117156)(0.015625,0.0022841)}; %(0.0078125,0.000362526)

\addplot+[semithick, mark options={solid, fill=markercolor}]
coordinates{(0.125,0.414449)(0.0625,0.0412224)(0.03125,0.00293039)(0.015625,0.000145701)};
\addplot+[semithick, mark options={solid, fill=markercolor}]
coordinates{(0.125,0.25082)(0.0625,0.0198863)(0.03125,0.000673152)(0.015625,2.98263e-05)}; %(0.0078125, 1.77421e-06)

\addplot+[semithick, mark options={solid, fill=markercolor}]
coordinates{(0.125,0.123516)(0.0625,0.00439285)(0.03125,0.000126852)(0.015625,5.61786e-06)};
\addplot+[semithick, mark options={solid, fill=markercolor}]
coordinates{(0.125,0.0768018)(0.0625,0.0020579)(0.03125,5.55322e-05)(0.015625,2.36108e-06)}; % (0.0078125,8.55848e-08)

\addplot+[semithick, mark options={solid, fill=markercolor}]
coordinates{(0.125,0.0332509)(0.0625,0.000970148)(0.03125,2.10334e-05)(0.015625,2.60008e-07)};
\addplot+[semithick, mark options={solid, fill=markercolor}]
coordinates{(0.125,0.0151231)(0.0625,0.000454595)(0.03125,1.00203e-05)(0.015625,6.57796e-08)}; %(0.0078125, 9.32809e-10)

\addplot+[semithick, mark options={solid, fill=markercolor}]
coordinates{(0.125,0.0106752)(0.0625,0.000133284)(0.03125,7.97032e-07)(0.015625,9.35152e-09)};
\addplot+[semithick, mark options={solid, fill=markercolor}]
coordinates{(0.125,0.00657096)(0.0625,9.22845e-05)(0.03125,3.60025e-07)(0.015625,3.63974e-09)}; %(0.0078125, 1.23726e-10)

\addplot+[semithick, mark options={solid, fill=markercolor}]
coordinates{(0.125,0.00330917)(0.0625,1.67825e-05)(0.03125,1.32696e-07)(0.015625,4.73518e-10)};
\addplot+[semithick, mark options={solid, fill=markercolor}]
coordinates{(0.125,0.00185273)(0.0625,1.21465e-05)(0.03125,1.14289e-07)(0.015625,1.72358e-10)}; 
\end{loglogaxis}
\end{tikzpicture}
}
\subfloat[Lightly warped mesh]{
\begin{tikzpicture}
\begin{loglogaxis}[
    width=.5\textwidth,
    xlabel={Mesh size $h$},
    ymin=2.5e-11, ymax=5,
      legend pos=south east, legend cell align=left, legend style={font=\tiny},	
    xmajorgrids=true, ymajorgrids=true, grid style=dashed,
    legend entries={GLL, Gauss}    
]
\pgfplotsset{
cycle list={{blue, mark=*}, {red, dashed ,mark=square*}}
}
\addplot+[semithick, mark options={solid, fill=markercolor}]
coordinates{(0.125,1.2082)(0.0625,0.265357)(0.03125,0.0301118)(0.015625,0.00467693)};
\addplot+[semithick, mark options={solid, fill=markercolor}]
coordinates{(0.125,0.555586)(0.0625,0.0773534)(0.03125,0.0129171)(0.015625,0.0024363)};
\addplot+[semithick, mark options={solid, fill=markercolor}]
coordinates{(0.125,0.448878)(0.0625,0.0485323)(0.03125,0.00385418)(0.015625,0.000225396)};
\addplot+[semithick, mark options={solid, fill=markercolor}]
coordinates{(0.125,0.287685)(0.0625,0.0190431)(0.03125,0.000721055)(0.015625,3.19095e-05)};
\addplot+[semithick, mark options={solid, fill=markercolor}]
coordinates{(0.125,0.122081)(0.0625,0.00694351)(0.03125,0.000224524)(0.015625,1.14555e-05)};
\addplot+[semithick, mark options={solid, fill=markercolor}]
coordinates{(0.125,0.0731583)(0.0625,0.00213914)(0.03125,6.32128e-05)(0.015625,2.73037e-06)};
\addplot+[semithick, mark options={solid, fill=markercolor}]
coordinates{(0.125,0.0424351)(0.0625,0.00114569)(0.03125,3.59697e-05)(0.015625,5.09643e-07)};
\addplot+[semithick, mark options={solid, fill=markercolor}]
coordinates{(0.125,0.0215298)(0.0625,0.000523714)(0.03125,9.27521e-06)(0.015625,7.14031e-08)};
\addplot+[semithick, mark options={solid, fill=markercolor}]
coordinates{(0.125,0.0112838)(0.0625,0.000219049)(0.03125,1.84304e-06)(0.015625,2.40402e-08)};
\addplot+[semithick, mark options={solid, fill=markercolor}]
coordinates{(0.125,0.00809138)(0.0625,8.23415e-05)(0.03125,3.95565e-07)(0.015625,4.01268e-09)};
\addplot+[semithick, mark options={solid, fill=markercolor}]
coordinates{(0.125,0.00388316)(0.0625,2.55074e-05)(0.03125,2.68102e-07)(0.015625,9.13754e-10)};
\addplot+[semithick, mark options={solid, fill=markercolor}]
coordinates{(0.125,0.00222029)(0.0625,1.33111e-05)(0.03125,9.72405e-08)(0.015625,1.81386e-10)};

\end{loglogaxis}
\end{tikzpicture}
}
\\
\subfloat[Moderately warped mesh]{
\begin{tikzpicture}
\begin{loglogaxis}[
    width=.5\textwidth,
    xlabel={Mesh size $h$},
    ylabel={$L^2$ errors}, 
    ymin=2.5e-11, ymax=5,
    legend pos=south east, legend cell align=left, legend style={font=\tiny},	
    xmajorgrids=true, ymajorgrids=true, grid style=dashed,
    legend entries={GLL, Gauss}    
]
\pgfplotsset{
cycle list={{blue, mark=*}, {red, dashed ,mark=square*}}
}
\addplot+[semithick, mark options={solid, fill=markercolor}]
coordinates{(0.125,1.64274)(0.0625,0.445945)(0.03125,0.0720148)(0.015625,0.0091066)};
\addplot+[semithick, mark options={solid, fill=markercolor}]
coordinates{(0.125,0.882247)(0.0625,0.138116)(0.03125,0.0268145)(0.015625,0.0039945)};

\addplot+[semithick, mark options={solid, fill=markercolor}]
coordinates{(0.125,0.662165)(0.0625,0.146131)(0.03125,0.0234934)(0.015625,0.00105368)};
\addplot+[semithick, mark options={solid, fill=markercolor}]
coordinates{(0.125,0.368084)(0.0625,0.0734059)(0.03125,0.00420438)(0.015625,9.19378e-05)};

\addplot+[semithick, mark options={solid, fill=markercolor}]
coordinates{(0.125,0.349184)(0.0625,0.0508268)(0.03125,0.00234785)(0.015625,5.36192e-05)};
\addplot+[semithick, mark options={solid, fill=markercolor}]
coordinates{(0.125,0.183472)(0.0625,0.0127617)(0.03125,0.000251929)(0.015625,7.6013e-06)};

\addplot+[semithick, mark options={solid, fill=markercolor}]
coordinates{(0.125,0.171986)(0.0625,0.0103578)(0.03125,0.000386834)(0.015625,5.76284e-06)};
\addplot+[semithick, mark options={solid, fill=markercolor}]
coordinates{(0.125,0.0924215)(0.0625,0.00276519)(0.03125,6.80528e-05)(0.015625,3.27246e-07)};

\addplot+[semithick, mark options={solid, fill=markercolor}]
coordinates{(0.125,0.0729963)(0.0625,0.00276104)(0.03125,4.11305e-05)(0.015625,2.36125e-07)};
\addplot+[semithick, mark options={solid, fill=markercolor}]
coordinates{(0.125,0.0378013)(0.0625,0.0007649)(0.03125,4.15848e-06)(0.015625,1.8685e-08)};

\addplot+[semithick, mark options={solid, fill=markercolor}]
coordinates{(0.125,0.026577)(0.0625,0.000589552)(0.03125,5.3039e-06)(0.015625,2.23719e-08)};
\addplot+[semithick, mark options={solid, fill=markercolor}]
coordinates{(0.125,0.0154563)(0.0625,0.000146207)(0.03125,8.83156e-07)(0.015625,1.18829e-09)};

\end{loglogaxis}
\end{tikzpicture}
}
\subfloat[Heavily warped mesh]{
\begin{tikzpicture}
\begin{loglogaxis}[
    width=.5\textwidth,
    xlabel={Mesh size $h$},
    ymin=2.5e-11, ymax=5,
    legend pos=south east, legend cell align=left, legend style={font=\tiny},	
    xmajorgrids=true, ymajorgrids=true, grid style=dashed,
    legend entries={GLL, Gauss}    
]
\pgfplotsset{
cycle list={{blue, mark=*}, {red, dashed ,mark=square*}}
}
\addplot+[semithick, mark options={solid, fill=markercolor}]
coordinates{(0.125,2.08578)(0.0625,1.33668)(0.03125,0.433653)(0.015625,0.0721002)};
\addplot+[semithick, mark options={solid, fill=markercolor}]
coordinates{(0.125,1.82833)(0.0625,0.746283)(0.03125,0.11006)(0.015625,0.0110115)};
\addplot+[semithick, mark options={solid, fill=markercolor}]
coordinates{(0.125,1.74649)(0.0625,0.546097)(0.03125,0.0879841)(0.015625,0.0090851)};
\addplot+[semithick, mark options={solid, fill=markercolor}]
coordinates{(0.125,1.39268)(0.0625,0.245852)(0.03125,0.0255562)(0.015625,0.00165226)};
\addplot+[semithick, mark options={solid, fill=markercolor}]
coordinates{(0.125,1.06289)(0.0625,0.218361)(0.03125,0.0209471)(0.015625,0.000665425)};
\addplot+[semithick, mark options={solid, fill=markercolor}]
coordinates{(0.125,0.740071)(0.0625,0.105075)(0.03125,0.003505)(0.015625,8.26421e-05)};
\addplot+[semithick, mark options={solid, fill=markercolor}]
coordinates{(0.125,0.695283)(0.0625,0.0996971)(0.03125,0.00390099)(0.015625,0.000118992)};
\addplot+[semithick, mark options={solid, fill=markercolor}]
coordinates{(0.125,0.457324)(0.0625,0.0354479)(0.03125,0.000767746)(0.015625,1.95823e-05)};
\addplot+[semithick, mark options={solid, fill=markercolor}]
coordinates{(0.125,0.449493)(0.0625,0.0374955)(0.03125,0.000909834)(0.015625,7.70064e-06)};
\addplot+[semithick, mark options={solid, fill=markercolor}]
coordinates{(0.125,0.274721)(0.0625,0.0120499)(0.03125,0.000157175)(0.015625,9.61243e-07)};
\addplot+[semithick, mark options={solid, fill=markercolor}]
coordinates{(0.125,0.294238)(0.0625,0.0162153)(0.03125,0.000166829)(.015625,1.19661e-06)};
\addplot+[semithick, mark options={solid, fill=markercolor}]
coordinates{(0.125,0.150916)(0.0625,0.00538908)(0.03125,2.79182e-05)(.015625,1.98845e-07)};
\end{loglogaxis}
\end{tikzpicture}
}
\caption{$L^2$ errors for the 2D isentropic vortex at time $T=5$ for degree $N = 2,\ldots,7$ GLL and Gauss collocation schemes.}
\label{fig:err2d}
\end{figure}

Figure~\ref{fig:err2d} shows the $L^2$ errors for affine, moderately warped, and heavily warped meshes.  For affine meshes, Gauss collocation results in a lower errors than GLL collocation at all orders.  However, the difference between both schemes decreases as $N$ increases.  This is not too surprising: on a Cartesian domain, the discrete $L^2$ inner product resulting from GLL quadrature converges to exact $L^2$ inner product over the space of polynomials as $N$ increases \cite{quarteroni1994introduction}.  However, GLL and Gauss collocation differ more significantly on curved meshes.  For both moderately and heavily warped meshes, the errors for a degree $N$ Gauss collocation scheme are nearly identical to errors for a higher order GLL collocation scheme of degree $(N+1)$.  These results are in line with numerical experiments in \cite{parsani2016entropy}, which show that GLL collocation schemes lose one order of convergence in the $L^2$ norm on unstructured non-uniform meshes.  Both results show that increasing quadrature accuracy significantly reduces the effect of polynomial aliasing due to curved meshes and spatially varying geometric terms.  

We note that $L^2$ approximation estimates on curved meshes \cite{lenoir1986optimal, warburton2013low} assume that the mesh size is small enough to be in the asymptotic regime (such that asymptotic error estimates hold).  On curved meshes, this requires that the mesh is sufficiently fine to resolve both the solution and geometric mapping.   The results in Figure~\ref{fig:err2d} suggest that, compared to Gauss collocation schemes, under-integrated GLL collocation schemes require a finer mesh resolution to reach the asymptotic regime.  

\subsection{3D isentropic vortex problem}

As in two dimensions, we test the accuracy of the proposed scheme using an isentropic vortex solution adapted to three dimensions.  The solution is the extruded 2D vortex propagating in the $y$ direction, with an analytic expression given in \cite{williams2013nodal}
\begin{align*}
\rho(\bm{x},t) &= \LRp{1-\frac{(\gamma-1)}{2}\Pi^2}^{\frac{1}{\gamma-1}}\\
u_i(\bm{x},t) &= \Pi r_i, \\
E(\bm{x},t) &= \frac{p_0}{\gamma-1}\LRp{1-\frac{\gamma-1}{2}\Pi^2}^{\frac{\gamma}{\gamma-1}} + \frac{\rho}{2}\sum_{i=1}^d u_i^2.
\end{align*}
where $(u_1,u_2,u_3)^T$ is the three-dimensional velocity vector and % $\Pi$, $\bm{r}$ are defined as
\[
\Pi = \Pi_{\max}e^{\frac{1-\sum_{i=1}^dr_i^2}{2}}, \qquad \begin{pmatrix}r_1\\r_2\\r_3\end{pmatrix} = \begin{pmatrix}
-(x_2-c_2-t)\\
x_1-c_1\\
0
\end{pmatrix}.
\]
We take \note{$c_1 = c_2 = 7.5$}, $p_0 = {1}/{\gamma}$, and $\Pi_{\max} = 0.4$, and solve on the domain \note{$[0,15]\times [0,20]\times [0,5]$ until final time $T=5$.  We decompose the domain into uniform hexahedral elements with edge length $h$}.  As in the 2D case, we also examine the effect of curvilinear mesh warping.  We construct a curved warping of the initial Cartesian mesh by mapping nodes on each hexahedron to warped nodal positions $(\tilde{x},\tilde{y},\tilde{z})$ through the transformation
\begin{align*}
\tilde{y} &= y + \note{\frac{1}{8}L_y\cos\LRp{3\pi \frac{(x-7.5)}{15}}\cos\LRp{\pi \frac{(y-10)}{20}}\cos\LRp{\pi \frac{(z-2.5)}{5}}}\\
\tilde{x} &= x + \note{\frac{1}{8}L_x\cos\LRp{\pi \frac{(x-7.5)}{15}}\sin\LRp{4\pi \frac{(\tilde{y}-10)}{20}}\cos\LRp{\pi \frac{(z-2.5)}{5}}}\\
\tilde{z} &= z + \note{\frac{1}{8}L_z\cos\LRp{\pi \frac{(\tilde{x}-7.5)}{15}}\cos\LRp{2\pi \frac{(\tilde{y}-10)}{20}}\cos\LRp{\pi \frac{(z-2.5)}{5}}}.
\end{align*} 
where $L_x = 15, L_y = 20, L_z = 5$.  
On curved meshes, the geometric terms are constructed using the approach of Kopriva described in Section~\ref{sec:2}.  A $C_{\rm CFL} = .75$ is used for all experiments.  

Figure~\ref{fig:err3d} shows the $L^2$ errors for degrees \note{$N = 2,\ldots, 5$}.\footnote{For the $N = 2$ GLL collocation scheme on the coarsest 3D mesh, the error is not shown because the solution diverged. } As in the 2D case, Gauss collocation schemes produce smaller errors than GLL collocation.  The difference between the two schemes on affine meshes is slightly more pronounced than in 2D, while the difference between the two schemes on curved meshes is less significant than observed in 2D experiments.  \note{We expect that this difference is due to the specific curved mapping.  The warped 2D mesh used in Figure~\ref{fig:err2d} was generated to mimic a severe ``vorticular'' warping.  The 3D mapping is less severely warped, due to different  domain dimensions and difficulties ensuring invertibility of the map from the reference to physical element.}

\begin{figure}
\centering
\subfloat[Affine mesh]{
\begin{tikzpicture}
\begin{loglogaxis}[
    width=.5\textwidth,
    xlabel={Mesh size $h$},
    ylabel={$L^2$ errors}, 
    ymin=1e-7, ymax=5,
    legend pos=south east, legend cell align=left, legend style={font=\tiny},	
    xmajorgrids=true, ymajorgrids=true, grid style=dashed,
    legend entries={GLL, Gauss}    
]
\pgfplotsset{
cycle list={{blue, mark=*}, {red, dashed ,mark=square*}}
}
% N = 2
\addplot+[semithick, mark options={solid, fill=markercolor}]
coordinates{(2.5,1.85364)(1.25,0.514785)(0.625,0.0793524)(0.3125,0.0132308)};
\addplot+[semithick, mark options={solid, fill=markercolor}]
coordinates{(2.5,1.08061)(1.25,0.164679)(0.625,0.0294895)(0.3125,0.00570425)};

% N = 3
\addplot+[semithick, mark options={solid, fill=markercolor}]
coordinates{(2.5,0.661158)(1.25,0.135728)(0.625,0.00847132)(0.3125,0.000394629)};
\addplot+[semithick, mark options={solid, fill=markercolor}]
coordinates{(2.5,0.397063)(1.25,0.0610903)(0.625,0.00189633)(0.3125,9.49625e-05)};

% N = 4
\addplot+[semithick, mark options={solid, fill=markercolor}]
coordinates{(2.5,0.325704)(1.25,0.0172973)(0.625,0.000580552)(0.3125,2.81526e-05)};
\addplot+[semithick, mark options={solid, fill=markercolor}]
coordinates{(2.5,0.169055)(1.25,0.00547459)(0.625,0.000154222)(0.3125,7.74137e-06)};

% N = 5
\addplot+[semithick, mark options={solid, fill=markercolor}]
coordinates{(2.5,0.0977112)(1.25,0.00526152)(0.625,8.73468e-05)(0.3125,9.44473e-07)};
\addplot+[semithick, mark options={solid, fill=markercolor}]
coordinates{(2.5,0.0386913)(1.25,0.00171584)(0.625,2.46858e-05)(0.3125,2.07682e-07)};

\end{loglogaxis}
\end{tikzpicture}
}
\subfloat[Curved mesh]{
\begin{tikzpicture}
\begin{loglogaxis}[
    width=.5\textwidth,
    xlabel={Mesh size $h$},
    ylabel={$L^2$ errors}, 
    ymin=1e-7, ymax=5,
    legend pos=south east, legend cell align=left, legend style={font=\tiny},	
    xmajorgrids=true, ymajorgrids=true, grid style=dashed,
    legend entries={GLL, Gauss}    
]
\pgfplotsset{
cycle list={{blue, mark=*}, {red, dashed ,mark=square*}}
}
% N = 2
\addplot+[semithick, mark options={solid, fill=markercolor}]
coordinates{(2.5,NaN)(1.25,1.57425)(0.625,0.448088)(0.3125,0.0871033)};
\addplot+[semithick, mark options={solid, fill=markercolor}]
coordinates{(2.5,2.48256)(1.25,0.981474)(0.625,0.232946)(0.3125,0.0529084)};

% N = 3
\addplot+[semithick, mark options={solid, fill=markercolor}]
coordinates{(2.5,2.03576)(1.25,0.599779)(0.625,0.101389)(0.3125,0.0120874)};
\addplot+[semithick, mark options={solid, fill=markercolor}]
coordinates{(2.5,1.62319)(1.25,0.336149)(0.625,0.0485175)(0.3125,0.00371689)};

% N = 4
\addplot+[semithick, mark options={solid, fill=markercolor}]
coordinates{(2.5,1.22642)(1.25,0.240606)(0.625,0.0221903)(0.3125,0.000696802)};
\addplot+[semithick, mark options={solid, fill=markercolor}]
coordinates{(2.5,0.90428)(1.25,0.131914)(0.625,0.00712465)(0.3125,0.000207085)};

% N = 5
\addplot+[semithick, mark options={solid, fill=markercolor}]
coordinates{(2.5,0.720447)(1.25,0.0986167)(0.625,0.00367923)(0.3125,0.000120058)};
\addplot+[semithick, mark options={solid, fill=markercolor}]
coordinates{(2.5,0.524023)(1.25,0.0500179)(0.625,0.00121588)(0.3125,3.23497e-05)};
\end{loglogaxis}
\end{tikzpicture}
}
\caption{$L^2$ errors for the 3D isentropic vortex for \note{$N = 2,\ldots,5$} on sequences of Cartesian and curved meshes.}
\label{fig:err3d}
\end{figure}
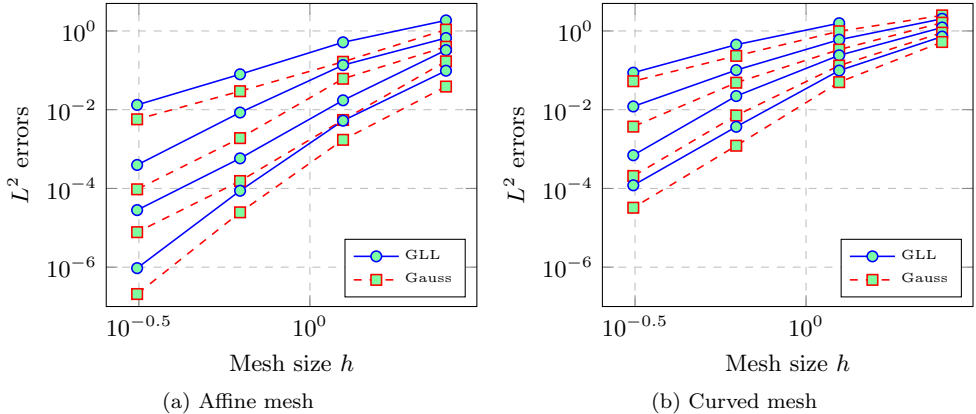

We also compared $L^2$ errors for both isoparametric and sub-parametric geometric mappings.  In both cases, the discrete GCL is satisfied.  For sub-parametric mappings, we chose the degree of approximation of geometry $N_{\rm geo} = \left\lfloor\frac{N}{2} \right\rfloor + 1$, such that the geometric terms are computed exactly and the GCL is satisfied by default.   This test is intended to address the fact that the GCL-preserving interpolation of Kopriva introduces a small approximation error, as the geometric terms are no longer exact.  For these sub-parametric mappings, the gap between GLL and Gauss collocation widens slightly at $N=2$.  However, the results for sub-parametric mappings are nearly identical to the results in the isoparametric case for higher polynomial degrees.

\subsection{Shock-vortex interaction}

The next problem considered is the shock-vortex interaction described in \cite{shu1998essentially}.  The domain is taken to be $[0,2]\times [0,1]$, and is triangulated with uniform quadrilateral elements.  Wall boundary conditions are imposed on the top and bottom boundaries, and inflow boundary conditions are typically imposed on the left and right boundaries.  We modify the problem setup such that periodic boundary conditions are imposed at the left and right boundaries.  Wall boundary conditions are imposed using a mirror state for the normal velocity, which was shown to be entropy stable in \cite{svard2014entropy, chen2017entropy}.  

The initial condition is taken to be the superposition of a stationary shock and a vortex propagating towards the right.  The stationary Mach $M_s = 1.1$ shock is positioned at $x = .5$ normal to the $x$ axis, with left state $\LRp{\rho_L, u_L, v_L, p_L} = \LRp{1, \sqrt{\gamma}, 0, 1}$, \note{where $u_L, v_L$ denote $x,y$ components of the left-state velocity}.  The right state is a scaling of the left state computed using the Rankine-Hugoniot conditions, such that the ratio of upstream and downstream states is
%$\LRp{\rho_R, u_R, v_R, p_R} = \LRp{\rho_L/ \alpha_\rho, u_L / \alpha_u, 0, p_L/\alpha_p}$, where
\begin{align*}
\frac{\rho_L}{\rho_R} = \frac{u_L}{u_R} = \frac{2+ (\gamma-1) M_s^2}{(\gamma+1)M_s^2}, \qquad \frac{p_L}{p_R} = 1+ \frac{2\gamma}{\gamma+1}\LRp{M_s^2-1}, \qquad v_R = 0.
\end{align*}
The isentropic vortex is centered at $(x_c,y_c) = (.25, .5)$ and given in terms of velocity fluctuations $\delta u$ and $\delta v$, which are functions of the tangential velocity $v_{\theta}$
\[
\delta u = v_{\theta} \sin(\theta), \qquad
\delta v = -v_{\theta} \cos(\theta), \qquad
v_{\theta} = \epsilon \tau e^{\alpha(1-\tau^2)},
%u = \epsilon \tau e^{\alpha(1-\tau^2)}\sin(\theta), \qquad
%v = -\epsilon \tau e^{\alpha(1-\tau^2)}\cos(\theta), \qquad
%\delta T = -\frac{(\gamma-1)\epsilon^2e^{2\alpha(1-\tau^2)}}{4\alpha\gamma}.
\]
where $r = \sqrt{(x-x_c)^2 + (y-y_c)^2}$ is the radius from the vortex center, $\tau = r/r_c$, and $\theta = \tan^{-1}\LRp{\frac{y-y_c}{x-x_c}}$.  We follow \cite{shu1998essentially} and take $\epsilon = .3$, $\alpha = .204$, and $r_c = .05$.  The vortex temperature is computed as a fluctuation $\delta T$ of the upstream state $T_L = p_L/\rho_L$
\[
\delta T =  - \frac{(\gamma-1)\epsilon^2 e^{2\alpha (1-\tau^2)}}{4\alpha\gamma}.  
\]
The vortex density and pressure are computed using an isentropic assumption.  To summarize, the initial condition for the shock-vortex interaction problem is 
\begin{gather*}
\rho = \rho_s \LRp{\frac{T_{\rm vor}}{T_L}}^{\frac{1}{\gamma-1}}, \qquad u_1 = u_s + \delta u, \qquad u_2 = v_s + \delta v, \qquad p = p_s \LRp{\frac{T_{\rm vor}}{T_L}}^{\frac{1}{\gamma-1}},
\end{gather*}
where $(\rho_s,  u_s, v_s, p_s)$ denote the discontinuous stationary shock solution given by the left and right states $\LRp{\rho_L, u_L, v_L, p_L}, \LRp{\rho_R, u_R, v_R, p_R}$.  

\begin{figure}
\centering
\subfloat[Entropy conservative flux, $T = .3$]{\includegraphics[width=.495\textwidth]{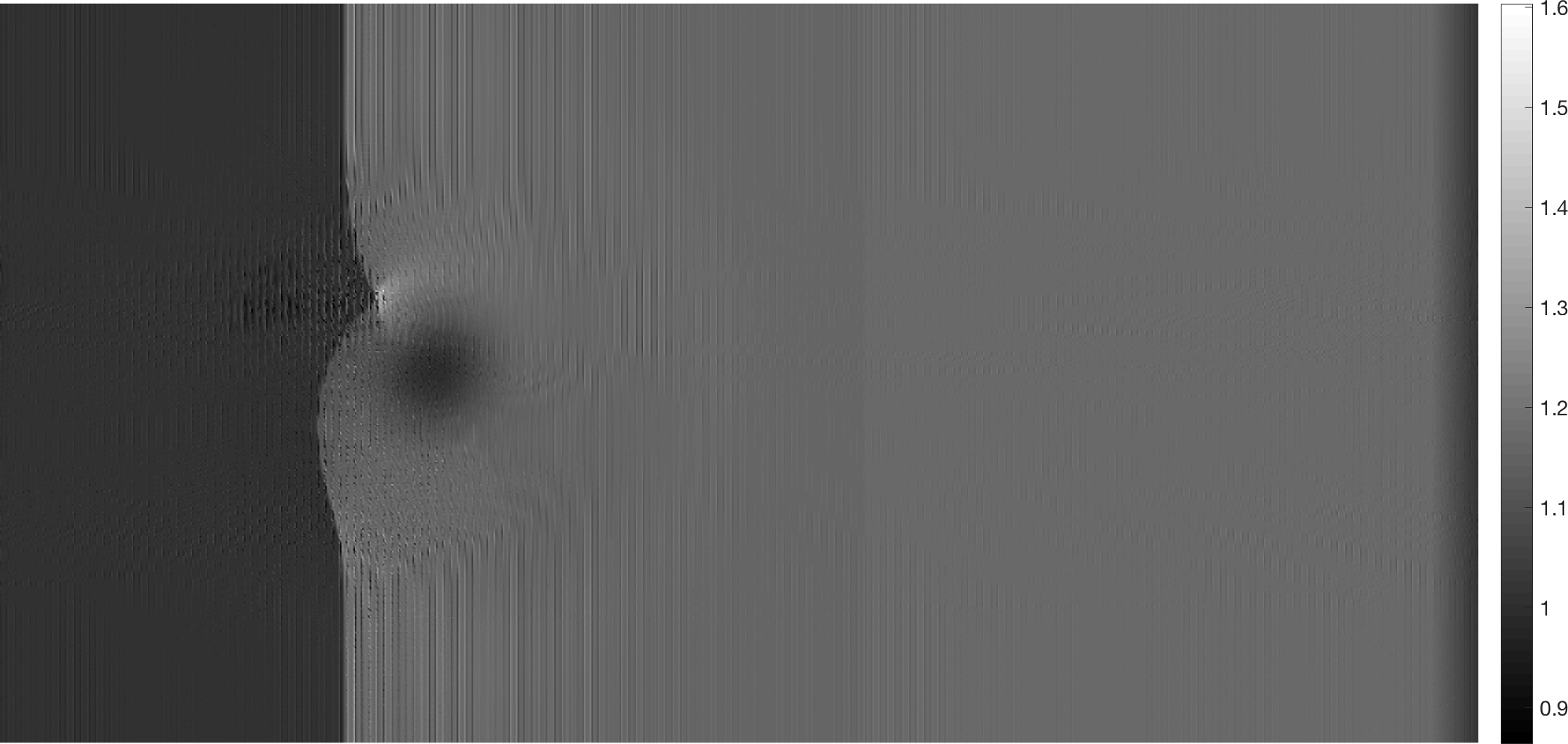}}
\hspace{.05em}
\subfloat[Entropy conservative flux, $T = .7$]{\includegraphics[width=.495\textwidth]{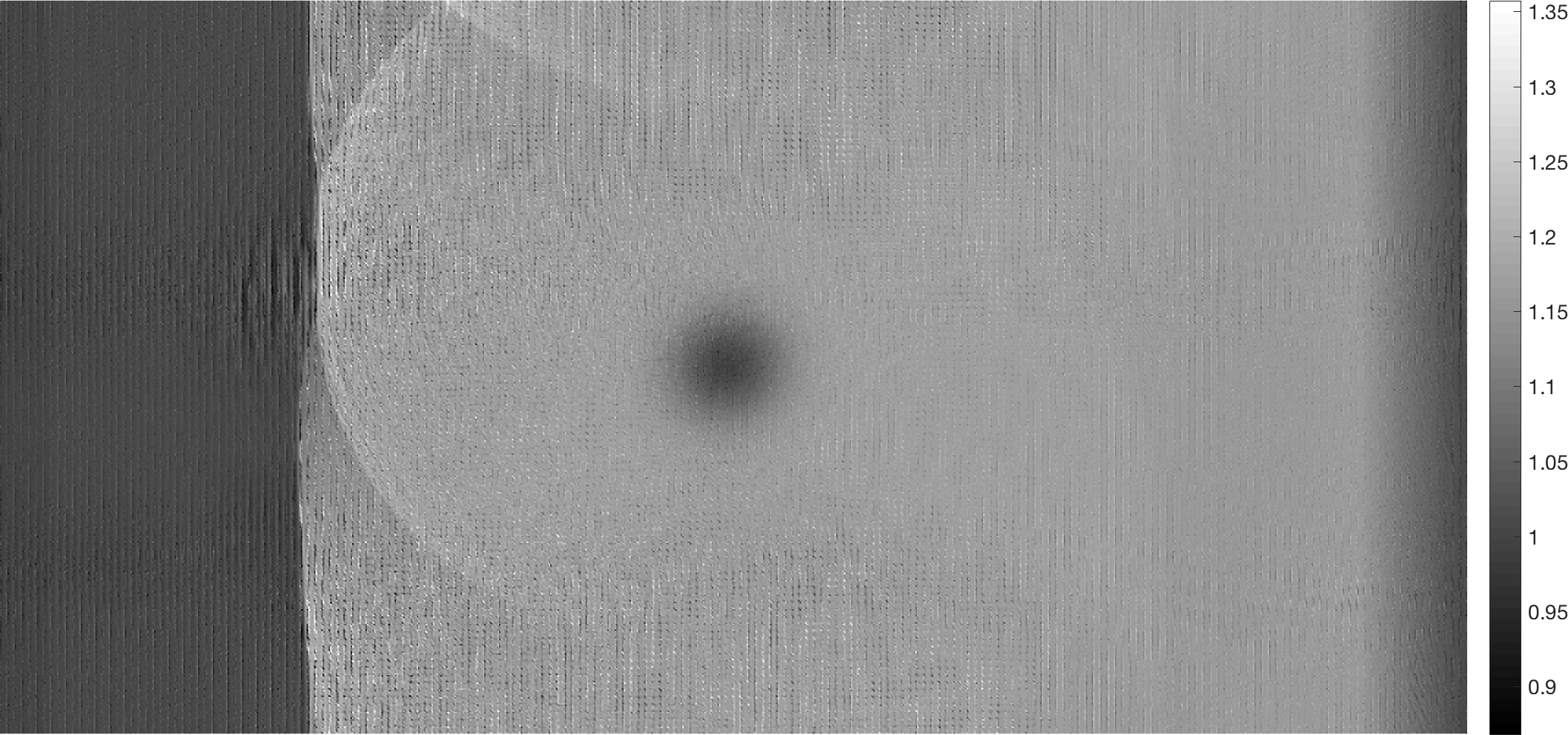}}\\
\subfloat[Lax-Friedrichs flux, $T = .3$]{\includegraphics[width=.495\textwidth]{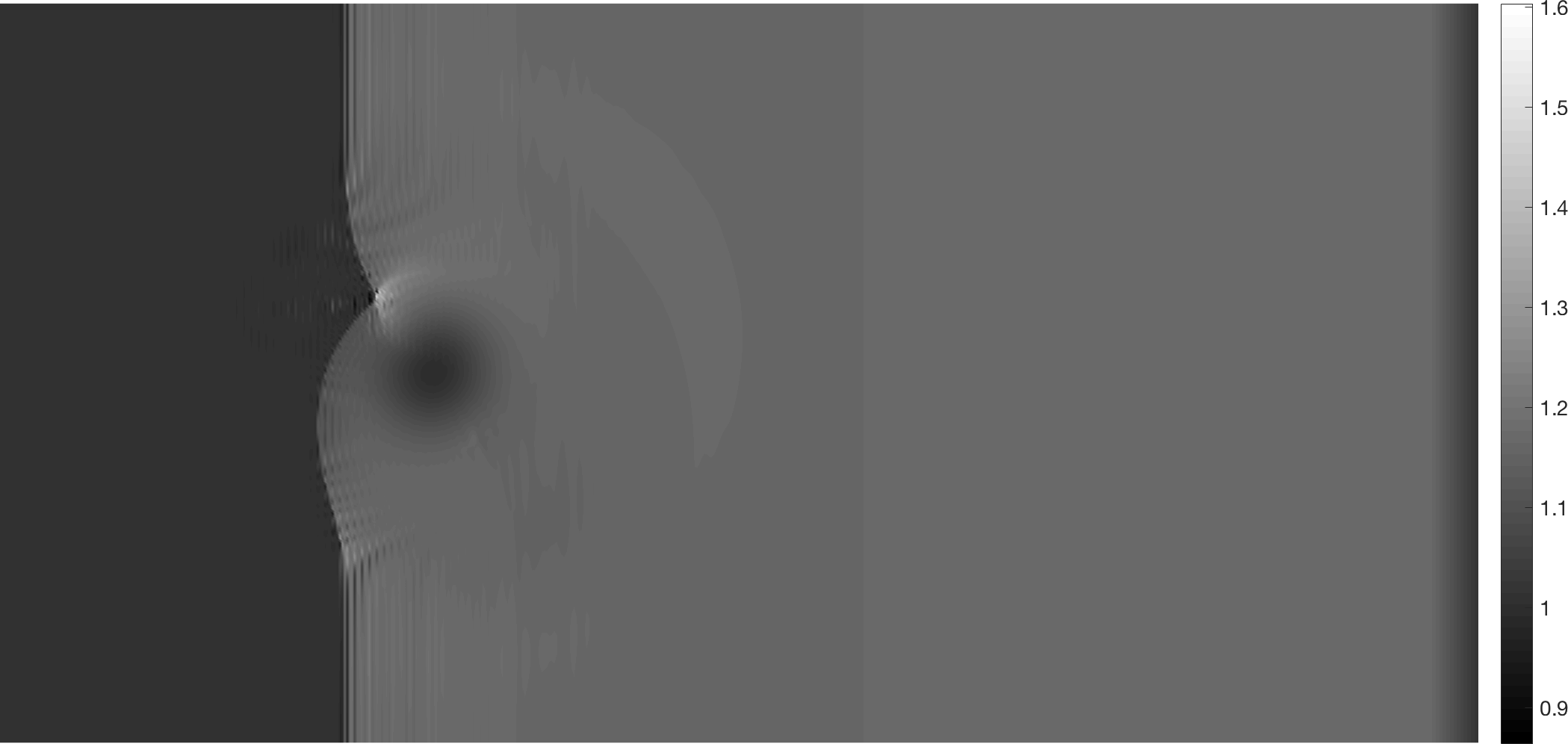}}
\hspace{.05em}
\subfloat[Lax-Friedrichs flux, $T = .7$]{\includegraphics[width=.495\textwidth]{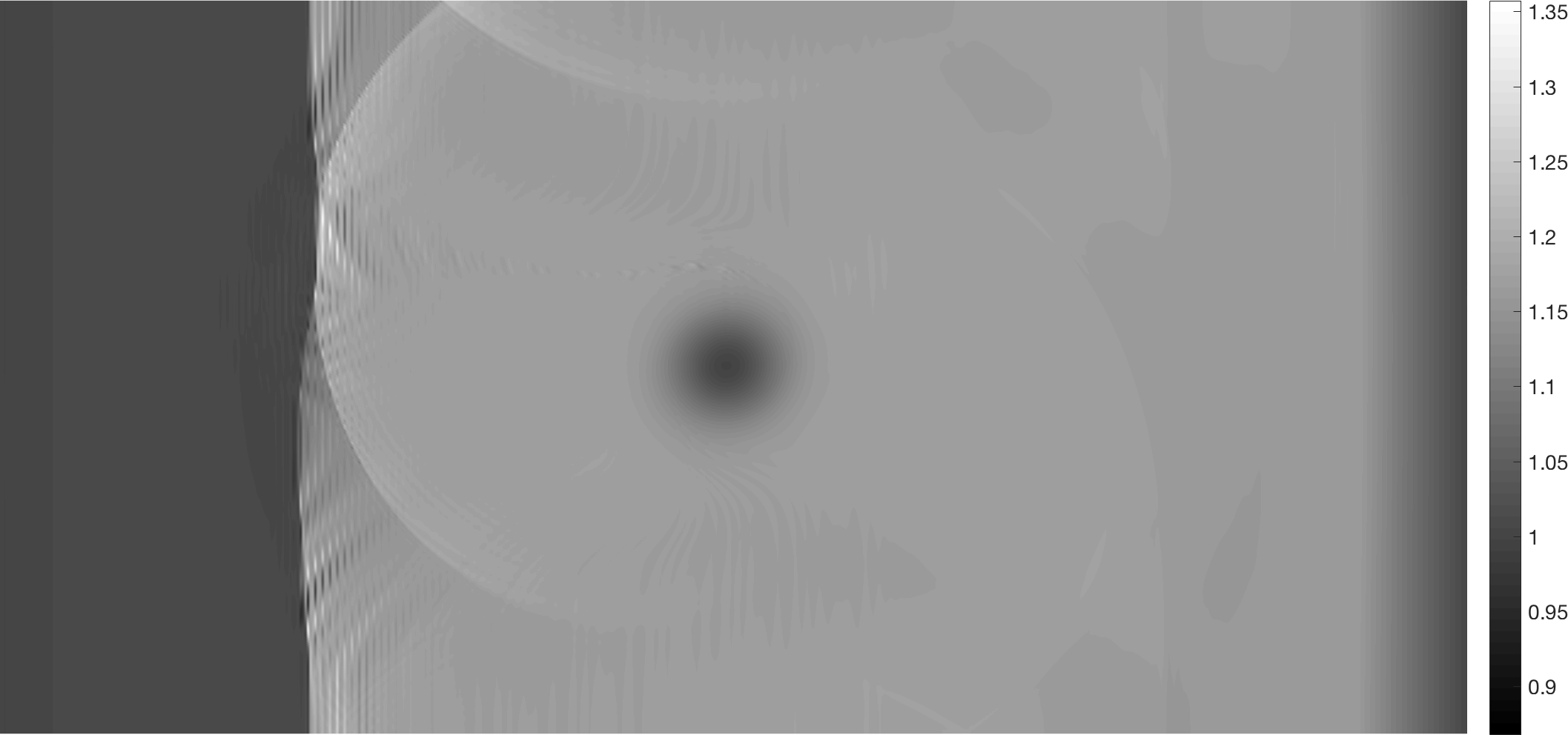}}\\
\subfloat[Matrix dissipation flux, $T = .3$]{\includegraphics[width=.495\textwidth]{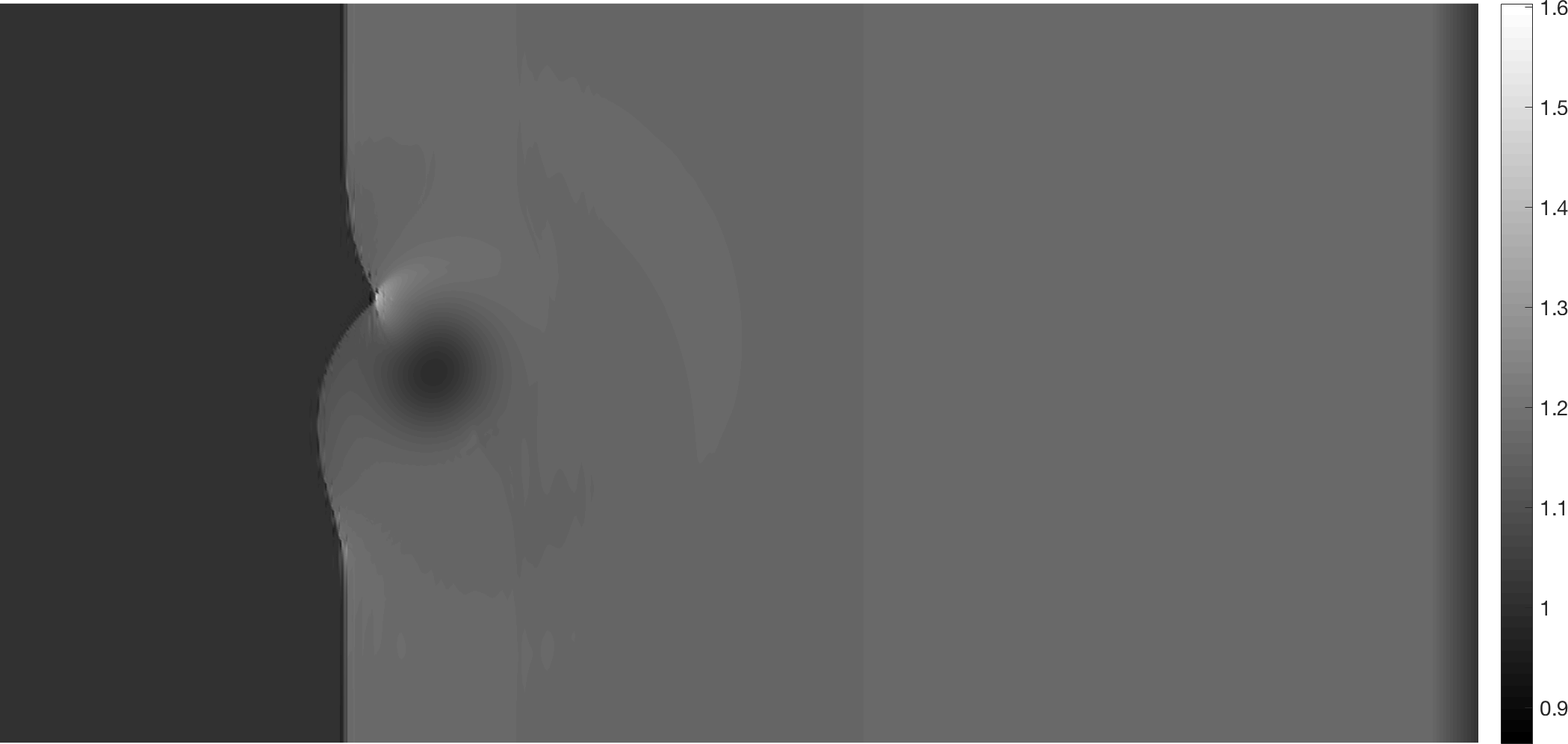}}
\hspace{.05em}
\subfloat[Matrix dissipation flux, $T = .7$]{\includegraphics[width=.495\textwidth]{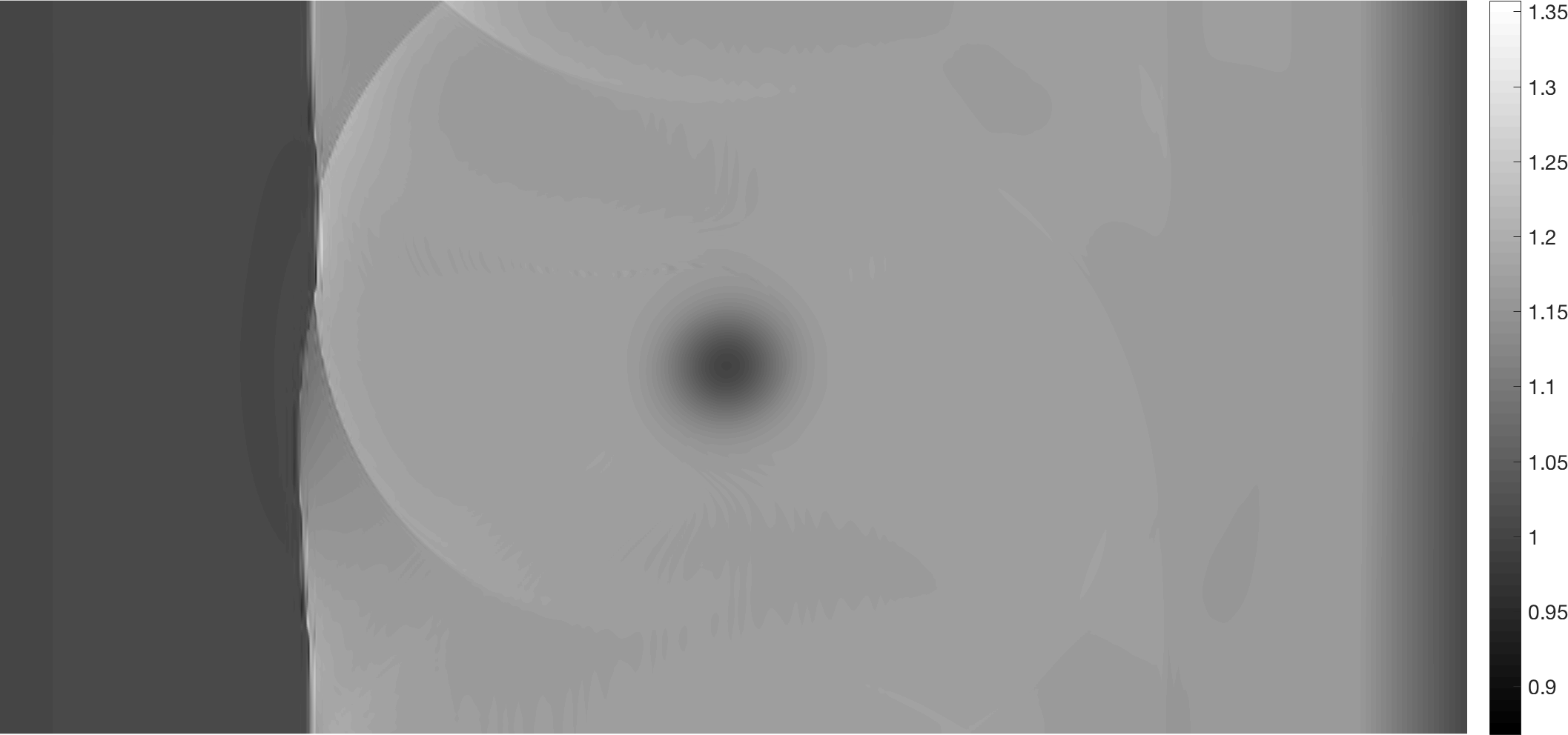}}
\caption{Shock vortex solution at time $T = .7$ using entropy stable Gauss collocation schemes with $N=4, h = 1/100$.}
\label{fig:shockvort}
\end{figure}

We compare three different entropy stable Gauss collocation schemes.  All three utilize the entropy conservative flux of Chandrashekar \cite{chandrashekar2013kinetic}.  For the first scheme, we do not introduce any additional interface dissipation, which produces an entropy conservative scheme.  The second scheme introduces an entropy-dissipative interface term using Lax-Friedrichs penalization, while the third scheme utilizes the matrix dissipation flux introduced in \cite{winters2017uniquely}.  This flux adds a dissipation of the form $\bm{R}\bm{D}\bm{R}^T\jump{{\bm{v}}^k_f}$, where $\jump{{\bm{v}}^k_f}$ denotes the jump in the entropy variables.  In two dimensions, the matrices $\bm{R}, \bm{D}$ are 
\begin{align*}
\bm{R} &= \begin{bmatrix}
1 & 1 & 0 & 1\\
\avg{u_1}-\bar{a} n_x & \avg{u_1} & n_y & \avg{u_1}+\bar{a} n_x\\
\avg{u_2}-\bar{a} n_y & \avg{u_2} & -n_x & \avg{u_2}+\bar{a} n_y\\
h-\bar{a} \bar{u}_n & \frac{1}{2}u^2_{\rm avg} & \bar{u}_n n_y - \avg{u_2}n_x & h + \bar{a} \bar{u}_n
\end{bmatrix}\\
\bm{D} &= {\rm diag}{\begin{pmatrix}
\LRb{\bar{u}_n - a}\frac{\avg{\rho}^{\log}}{2\gamma}, & \LRb{\bar{u}_n}\frac{\avg{\rho}^{\log}(\gamma-1)}{\gamma}, & \LRb{\bar{u}_n}p_{\rm avg}, & \LRb{\bar{u}_n + a}\frac{\avg{\rho}^{\log}}{2\gamma} \end{pmatrix}},
\end{align*}
where $\bar{a}, \bar{u}_n$ are defined in 2D as
\begin{gather*}
\bar{u}_n = \avg{u_1}n_x + \avg{u_2}n_y, \qquad \bar{a} = \sqrt{\frac{\gamma p_{\rm avg}}{\avg{\rho}^{\log}}}, \qquad h = \frac{\gamma}{2(\gamma-1)\avg{\beta}^{\log}} + \frac{1}{2}\bar{u}, 
\end{gather*}
and $p_{\rm avg}, u^2_{\rm avg}$ are defined as in (\ref{eq:fluxaux}).  

%It is known that the Lax-Friedrichs flux is fairly dissipative compared to more nuanced fluxes \cite{harten1983upstream}, though this dissipation is less pronounced at higher polynomial degrees in practice.  
Figure~\ref{fig:shockvort} shows density solutions for $N=4$ and $h = 1/100$ Gauss collocation schemes using a non-dissipative entropy conservative flux, a dissipative Lax-Friedrichs flux, and a matrix dissipation flux.  In all cases, the vortex passes through the shock stably without the use of additional slope limiting, filtering, or artificial viscosity.  However, the entropy conservative scheme produces a large number of spurious oscillations in the solution.  These are reduced away from the shock under Lax-Friedrichs dissipation, though oscillations still persist around a large neighborhood of the discontinuity.  The Gibbs-type oscillations are most localized under the matrix dissipation flux.  

We note that this experiment also verifies that entropy stable decoupled SBP schemes (including the over-integrated case \cite{chan2017discretely}) are compatible with entropy stable wall boundary conditions.  As far as the authors know, the stable and high order accurate imposition of such boundary conditions for existing GSBP couplings described in \cite{crean2017high} and Section~\ref{sec:gsbpsat} remains an open problem.  

\subsection{Inviscid Taylor-Green vortex}

We conclude by investigating the behavior of entropy stable Gauss collocation schemes for the inviscid Taylor--Green vortex \cite{ae1937mechanism, gassner2016split, crean2018entropy}.  This problem is posed on the periodic box $[-\pi,\pi]^3$, with initial conditions 
\begin{gather*}
\rho = 1, \qquad p = \frac{100}{\gamma} + \frac{1}{16} \LRp{\cos(2x_1) + \cos(2x_2)}\LRp{2+\cos(2x_3)},\\
\note{u_1 = \sin(x_1)\cos(x_2)\cos(x_3),} \qquad
\note{u_2 = -\cos(x_1)\sin(x_2)\cos(x_3),}\qquad
\note{u_3 = 0.}
\end{gather*}
The Taylor--Green vortex is used to study the transition and decay of turbulence.  In the absence of viscosity, the Taylor--Green vortex develops successively smaller scales as time increases.  As a result, the solution is guaranteed to contain under-resolved features after a sufficiently large time.  We study the evolution of kinetic energy $\kappa(t)$ 
\[
\kappa(t) =\frac{1}{\LRb{\Omega}} \int_{\Omega} \rho \note{ \LRp{\sum_{i=1}^d u_i^2}} \diff{\bm{x}},
\]
as well as the kinetic energy dissipation rate $-\pd{\kappa}{t}$, which is approximated by differencing $\kappa(t)$.  For both GLL and Gauss collocation schemes, integrals in the kinetic energy formula are evaluated using an $(N+1)$-point Gauss quadrature rule.

\begin{figure}
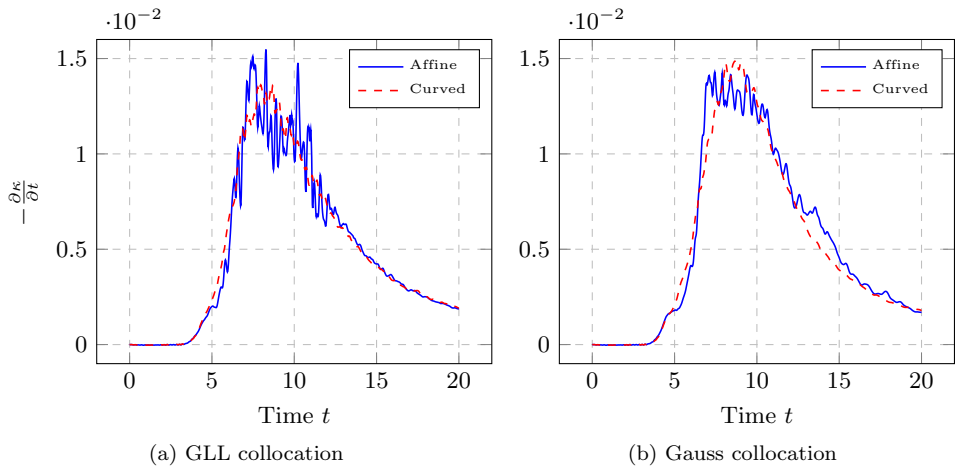

\centering
\subfloat[GLL collocation]{
% [inline block 0: 2 envs, 66632 chars -> data_tex | \begin{tikzpicture} \begin{axis}[...]

}
\caption{Kinetic energy dissipation rate for entropy stable GLL and Gauss collocation schemes with $N=7$ and $h = \pi/8$. }
\label{fig:tgv}
\end{figure}

Figure~\ref{fig:tgv} shows the evolution of the kinetic energy dissipation rate from $t\in [0,20]$ for Gauss and GLL collocation schemes on affine and curved meshes.  The curved meshes used here are constructed by modifying nodal positions through the mapping
\[
\tilde{\bm{x}} = \bm{x} + \frac{1}{2}\sin(x)\sin(y)\sin(z).
\]
All cases utilize $N=7$ and $h = \pi/8$ (corresponding to $8$ elements per side), as well as a CFL of $.25$.  Lax-Friedrichs dissipation is used for all simulations.  For both affine and curved meshes, the presented Gauss collocation schemes remain stable in the presence of highly under-resolved solution features.  Kinetic energy dissipation rates for both GLL and Gauss collocation are qualitatively similar and are consistent with existing results in the literature for the inviscid Taylor-Green vortex \cite{gassner2016split,chan2018discretely}.

\section{A theoretical cost comparison}

While the numerical experiments presented in previous sections demonstrate several advantages of Gauss collocation methods over GLL collocation schemes, these do not account for additional costs associated with Gauss collocation schemes.  While a detailed time-to-solution comparison is outside of the scope of this work, we can compare computational costs associated with Gauss, staggered-grid, and GLL collocation schemes.  

The main computational costs associated with entropy stable schemes are volume operations, which include evaluations of two-point fluxes and applications of one-dimensional differentiation and interpolation matrices.  We do not count inter-element communication or flux computations, as these are typically less expensive than volume operations (especially for higher polynomial degrees).  The total number of flux computations for each scheme can also be reduced using the symmetry of $\bm{f}_S$; however, this does not affect relative differences in the number of two-point flux evaluations between GLL, Gauss, and staggered grid schemes.  However, exploiting symmetry in the Gauss scheme is slightly more straightforward (compared to GLL and staggered grid schemes) due to the block structure of the decoupled SBP operator.

In 3D, a degree $N$ GLL collocation scheme contains $(N+1)^3$ nodes.  Two-point fluxes are computed between states at one node and states at $(N+1)$ additional nodes, resulting in a total of $3(N+1)^4$ two-point flux evaluations.  For a staggered grid scheme, two-point fluxes are computed on a degree $(N+1)$ GLL grid consisting of $(N+2)^3$ nodes, resulting in $3(N+2)^4$ evaluations.  

Gauss collocation schemes compute two-point fluxes on a grid of Gauss nodes, resulting in $3(N+1)^4$ flux evaluations in 3D.  One must also evaluate two-point fluxes between face nodes and volume nodes, and vice versa.  As a result, Gauss schemes evaluate two-point fluxes twice between each face node and the line of $(N+1)$ volume nodes normal to that face, resulting in an extra $12(N+1)^3$ flux evaluations in 3D.  %Because these flux evaluations correspond to the computation of $\bm{Q}^i_N\circ \bm{F}_S$, one can also exploit the skew symmetric nature of the off-diagonal blocks of $\bm{Q}^i_N$ to reduce the number of two-point flux evaluations to $2(N+1)^3$.  

We also consider costs associated with applying operator matrices.  GLL and Gauss collocation schemes require applying a one-dimensional differentiation matrix to each line of nodes, resulting in $O\LRp{3(N+1)^4}$ operations in 3D.  Staggered-grid schemes require $O\LRp{3(N+2)^4}$ operations (corresponding to differentiation in each coordinate on a degree $(N+1)$ GLL grid) as well as $O\LRp{6(N+2)(N+1)^3}$ operations required for interpolation to and from a $(N+1)$ point Gauss grid to a $(N+2)$ point GLL grid in three dimensions.  Gauss collocation schemes require only interpolation to and from points on 6 faces, resulting in an additional $O\LRp{12(N+1)^3}$ operations per dimension per element.  %\note{Maybe factor in dimensions.}

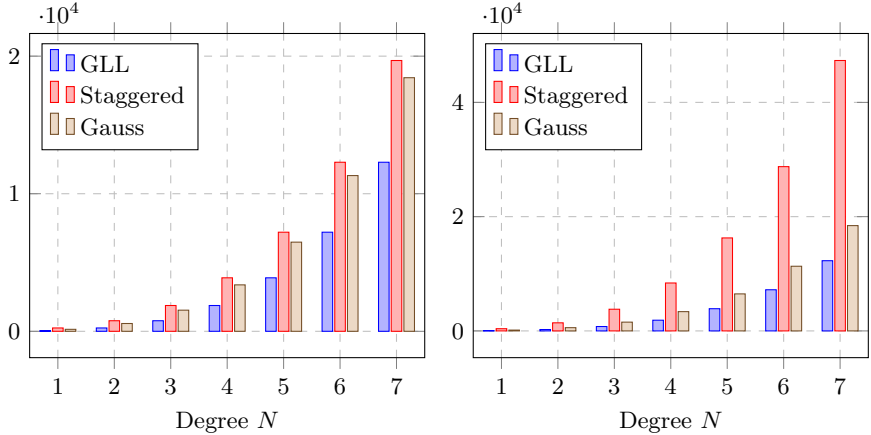
\begin{figure}
\centering
\subfloat[Two-point flux evaluations per dimension]{
\begin{tikzpicture}
\begin{axis}[
	width=.525\textwidth,
	legend cell align=left,
	xlabel={Degree $N$},
	xmin=.5, xmax=7.5,
%	ymin=0,ymax=7,
%	ybar stacked,
             ybar=2*\pgflinewidth,
	bar width=4pt,
%	nodes near coords,
	%xmin=.5, xmax=9.5,
	xtick={1,2,3,4,5,6,7},
	legend pos=north west,
	xmajorgrids=true,
	ymajorgrids=true,
	grid style=dashed,
] 
\addplot coordinates{(1,48)(2,243)(3,768)(4,1875)(5,3888)(6,7203)(7,12288)};
\addplot coordinates{(1,243)(2,768)(3,1875)(4,3888)(5,7203)(6,12288)(7,19683)};
\addplot coordinates{(1,144)(2,567)(3,1536)(4,3375)(5,6480)(6,11319)(7,18432)};
%\addplot coordinates{(1,96)(2,405)(3,1152)(4,2625)(5,5184)(6,9261)(7,15360)};
\legend{GLL, Staggered, Gauss}%, Gauss (w/sym.)}
\end{axis}
\end{tikzpicture}
}
\subfloat[Est.\ matrix computation operations ]{
\begin{tikzpicture}
\begin{axis}[
	width=.525\textwidth,
	legend cell align=left,
	xlabel={Degree $N$},
	xmin=.5, xmax=7.5,
             ybar=2*\pgflinewidth,
	bar width=4pt,             
	xtick={1,2,3,4,5,6,7},
	legend pos=north west,
	xmajorgrids=true,
	ymajorgrids=true,
	grid style=dashed,
] 
%nodal runtimes
\addplot coordinates{(1,48)(2,243)(3,768)(4,1875)(5,3888)(6,7203)(7,12288)};
\addplot coordinates{(1,387)(2,1416)(3,3795)(4,8388)(5,16275)(6,28752)(7,47331)};
\addplot coordinates{(1,144)(2,567)(3,1536)(4,3375)(5,6480)(6,11319)(7,18432)};

\legend{GLL, Staggered, Gauss}

\end{axis}
\end{tikzpicture}}
\caption{Comparison of GLL, staggered grid, and Gauss collocation schemes in terms of number of two-point flux evaluations and operations associated with matrix computations in three dimensions. }
\label{fig:cost}
\end{figure}

Figure~\ref{fig:cost} shows the estimated number of two-point flux evaluations and matrix operations for GLL, staggered grid, and Gauss collocation schemes in three dimensions.  We observe that a straightforward implementation of Gauss collocation does not significantly reduce the number of flux evaluations compared to staggered grid schemes, though Gauss collocation schemes result in a  significantly smaller number of operations from matrix computations.  

We note that the number of two-point flux evaluations and matrix operations will impact runtime differently depending on the implementation and computational architecture.  For example, while flux evaluations typically dominate runtimes for serial CPU implementations at all orders of approximation, they do not contribute significantly to runtimes at polynomial degrees $N=1,\ldots, 8$ for implementations on Graphics Processing Units (GPUs) \cite{wintermeyer2018entropy}.  

\section{Conclusion}

This work shows how to construct efficient entropy stable high order Gauss collocation DG schemes on quadrilateral and hexahedral meshes.  Key to the construction of efficient methods are decoupled SBP operators, which deliver entropy stability and high order accuracy while maintaining compact inter-element coupling terms.  These operators are also compatible with existing entropy stable methods for applying interface dissipation \cite{winters2017uniquely} or imposing boundary conditions.  Numerical experiments demonstrate both the stability and high order accuracy of the proposed Gauss collocation schemes on both affine and curvilinear meshes.  \note{We note that, while the numerical experiments presented here consider only mapped Cartesian domains, the method is also applicable to complex geometries, and future work will focus on studying the performance of such methods on curvilinear quadrilateral and hexahedral unstructured meshes.}

We note that results for Gauss collocation are similar to those attained by entropy stable staggered-grid schemes \cite{parsani2016entropy}, and require a similar number of two-point flux evaluations.  However, Gauss collocation schemes result in a lower number floating point operations from matrix computations compared to staggered-grid methods.  Finally, while a rigorous computational comparison between GLL and Gauss collocation schemes remains to be done, Gauss collocation schemes show significant improvements in accuracy compared to GLL collocation schemes on non-Cartesian meshes.  In particular, for sufficiently warped curvilinear mappings, degree $N$ Gauss collocation schemes achieve an accuracy comparable to degree $(N+1)$ GLL collocation schemes in two and three dimensions.  

%\note{Mention possible application to non-conforming.}

\section{Acknowledgments}

Jesse Chan is supported by NSF DMS-1719818 and DMS-1712639.  The authors thank Florian Hindenlang and Jeremy Kozdon for helpful discussions and suggestions.

\appendix

\section{Decoupled SBP operators for general choices of quadrature and basis}
\label{app:decoupled}

For general choices of quadrature and basis, decoupled projection operators involve a volume quadrature interpolation matrix $\bm{V}_q$, a face quadrature interpolation matrix $\bm{V}_f$, and a quadrature-based $L^2$ projection matrix $\bm{P}_q$.  Let $\LRc{\phi_j}_{j=1}^{N_p}$ denote a set of $N_p$ basis functions, and let $\LRc{\bm{x}_i, \bm{w}_i}_{i = 1}^{N_q}$ denote a set of $N_q$ volume quadrature points and weights in $d$ dimensions.  We also introduce the set of $N^f_q$ surface quadrature points and weights $\LRc{\bm{x}^f_i, \bm{w}^f_i}_{i=1}^{N^f_q}$.  
Then, $\bm{V}_q, \bm{V}_f$ are given as 
\begin{align*}
\LRp{\bm{V}_q}_{ij} &= \phi_j(\bm{x}_i), \qquad 1 \leq i \leq N_q, \qquad 1 \leq j \leq N_p,\\
\LRp{\bm{V}_f}_{ij} &= \phi_j\LRp{\bm{x}^f_i}, \qquad 1 \leq i \leq N^f_q, \qquad 1\leq j \leq N_p.
\end{align*}
These matrices can be used to define the quadrature-based $L^2$ projection matrix $\bm{P}_q$.  Let $\bm{W}, \bm{W}_f$ denote the diagonal matrix of volume and surface quadrature weights, respectively.  Then, 
\[
\bm{M} = \bm{V}_q^T\bm{W}\bm{V}_q, \qquad \bm{P}_q = \bm{M}^{-1}\bm{V}_q^T\bm{W}.  
\]
Let $\bm{D}^i$ now denote a modal differentiation matrix with respect to the $i$th coordinate, which maps coefficients in the basis $\phi_j$ to coefficients of the $i$th derivative.  By composing this matrix with interpolation and projection matrices, one can define differencing operators $\bm{D}_q^i = \bm{V}_q\bm{D}^i\bm{P}_q$ which map values at quadrature points to values of approximate derivatives at quadrature points.  Moreover, $\bm{Q}^i = \bm{W}\bm{D}_q^i$ satisfies a generalized SBP property involving the face interpolation and projection matrices $\bm{V}_f, \bm{P}_q$ \cite{chan2017discretely}.  

The decoupled SBP operator $\bm{Q}_N^i$ is then given as
\begin{equation}
\bm{Q}_N^i = \begin{bmatrix}
\bm{Q}^i - \frac{1}{2}\LRp{\bm{V}_f\bm{P}_q}^T\bm{W}_f \diag{\bm{n}_i}{\bm{V}_f\bm{P}_q} & \frac{1}{2}\LRp{\bm{V}_f\bm{P}_q}^T\bm{W}_f \diag{\bm{n}_i}\\
-\frac{1}{2}\bm{W}_f \diag{\bm{n}_i}{\bm{V}_f\bm{P}_q} & \frac{1}{2}\bm{W}_f \diag{\bm{n}_i}
\end{bmatrix}.
\label{eq:qni_quad}
\end{equation}
A straightforward computation shows that $\bm{Q}_N^i$ satisfies an SBP property \cite{chan2017discretely}.  
%\[
%\bm{Q}_N^i + \LRp{\bm{Q}_N^i}^T = \begin{bmatrix}\bm{0} &\\
%& \bm{W}_f \diag{\bm{n}_i}\end{bmatrix}.
%\]
It is worth noting that the form of $\bm{Q}_N^i$ does not depend on the choice of basis.  So long as the approximation space spanned by the basis $\phi_j$ does not change, the domain and range of $\bm{Q}_N^i$ depend solely on the choice of volume and surface quadrature points.  

A collocation scheme assumes that the number of quadrature points is identical to the number of basis functions.  If the solution is represented using degree $N$ Lagrange polynomials at quadrature points, the matrices $\bm{V}_q, \bm{P}_q$ simplify to
\[
\LRp{\bm{V}_q}_{ij} = \delta_{ij}, \qquad \bm{M} = \bm{W}, \qquad \bm{P}_q = \bm{M}^{-1}\bm{V}_q^T\bm{W} = \bm{I}.
\]
Plugging these simplifications into (\ref{eq:qni_quad}) and restricting to one spatial dimension recovers the decoupled SBP operator (\ref{eq:qndef}).
%In summary, the collocation assumption removes the need to interpolate to and from a polynomial space to volume quadrature points within an entropy stable scheme.

%\note{Todo: add.}

\bibliographystyle{unsrt}
\bibliography{dg}

\end{document}